\newcommand{\vertiii}[1]{{\left\vert\kern-0.25ex\left\vert\kern-0.25ex\left\vert #1
    \right\vert\kern-0.25ex\right\vert\kern-0.25ex\right\vert}}
\newcommand{\RR}{\mathbb{R}}
\newcommand{\ZZ}{\mathbb{Z}}
\newcommand{\NN}{\mathbb{N}}
\newcommand{\TT}{\mathbb{T}}
\newcommand{\EE}{\mathbb{E}}
\theoremstyle{plain}
\newtheorem{theorem}{Theorem}[section]
\newtheorem{lemma}[theorem]{Lemma}
\newtheorem{mydef}[theorem]{Definition}
\begin{document}

\title{Pointwise double recurrence and nilsequences}

\author{Idris Assani}
\address{Department of Mathematics, The University of North Carolina at Chapel Hill,
Chapel Hill, NC 27599}
\email{assani@math.unc.edu}
\urladdr{http://www.unc.edu/math/Faculty/assani/} 


\begin{abstract}
Consider  a system $(X, \mathcal{F}, \mu, T)$, bounded functions $f_1, f_2 \in L^\infty(\mu)$ and $a,b \in \ZZ.$ We show that  there exists a set of full measure $X_{f_1, f_2}$ in $X$ such that for all $x \in X_{f_1, f_2}$ and for every nilsequence $b_n$  , the averages
\[ \frac{1}{N} \sum_{n=1}^N f_1(T^{an}x)f_2(T^{bn}x)b_n \]
converge. We will show that this can be deduced from the classical Wiener-Wintner theorem for the double recurrence theorem. Together with the past work on this subject, we will show that several statements regarding the extension of the double recurrence theorem are equivalent.
\end{abstract}

\maketitle

\section{Introduction}
 Throughout this paper we denote a system by a quadruple $(X, \mathcal{F}, \mu, T)$, where $(X, \mathcal{F}, \mu)$ is a probability measure space, and $T$ is a measure preserving transformation on it.  Without loss of generality we can assume that the system is standard meaning that $X$ is a compact metric space, $T$ a homeomorphism and $\mathcal{F}$ the $\sigma$-field of the Borelian subsets of $X$ . The purpose of this note is to answer  a question raised by B. Weiss relative to the extensions of J. Bourgain double recurrence theorem obtained recently in \cite{WWDR}, \cite{WWDR_poly} and in \cite{NewUnivWeight_Norm}.
 We start by recalling the definition of a nilsequence as given in \cite{HostKraUniformity}.
  \begin{mydef}
   Let $ a _n$ be a sequence of complex numbers . This sequence is a  $k-\text{step}$ basic nilsequence if it can be written as $F(g^n\Gamma)$ , where $F\in \mathcal{C}(X)$ , $X = G/\Gamma$, G is a $k$-step nilpotent group, $\Gamma$ a discrete cocompact subgroup, and $g\in G.$
   \end{mydef}
   \begin{mydef}
  A sequence $a_n$ is a $k$-step nilsequence if it is a uniform limit of $k$-step  basic nilsequence.
 \end {mydef}
 We shall also recall the classical Wiener-Wintner ergodic theorem from 1941.
 \begin{theorem}[Wiener-Wintner Ergodic Theorem, \cite{WW}]\label{WW}
 Let $(X, \mathcal{F}, \mu, T)$ be a measure-preserving system, and $f \in L^1(\mu)$. Then there exists a set of full measure $X_f$ such that for any $x \in X_f$ and for any $t \in [0, 1)$, the averages
 \[ \frac{1}{N} \sum_{n=1}^N f(T^nx)e^{2\pi int} \]
 converge. 
 \end{theorem}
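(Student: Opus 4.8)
The plan is to pass, via the spectral theorem for the Koopman operator $Uf = f\circ T$, to a dichotomy governed by the spectral type of $f$, treating the uniformity in $t$ by van der Corput's inequality. The starting observation is the pointwise maximal bound
\[
W^{*}f(x) := \sup_{N}\ \sup_{t\in[0,1)}\left|\frac1N\sum_{n=1}^{N}f(T^{n}x)e^{2\pi int}\right| \le \sup_{N}\frac1N\sum_{n=1}^{N}|f(T^{n}x)| =: f^{*}(x),
\]
which holds because $|e^{2\pi int}|=1$. Since $f^{*}$ obeys the weak-$(1,1)$ maximal inequality, $W^{*}f$ dominates the oscillation in $N$ of the averages \emph{uniformly in} $t$; a standard Banach-principle argument then lets me transfer ``a.e.\ convergence for every $t$'' from any dense subclass of $L^{1}(\mu)$ to all of $L^{1}(\mu)$. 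In particular it suffices to prove the statement for $f\in L^{2}(\mu)$.

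For such $f$ I would split $f=f_{d}+f_{c}$ according to the spectral measure $\sigma_{f}$ on $\TT$ determined by $\la U^{n}f,f\ra=\int_{\TT}e^{2\pi in\theta}\,d\sigma_{f}(\theta)$: here $f_{d}$ is the orthogonal projection of $f$ onto the closed span of the eigenfunctions of $U$ (the Kronecker factor) and $f_{c}=f-f_{d}$, whose spectral measure $\sigma_{f_{c}}$ is then continuous. The eigenfunction part is elementary: by the maximal bound above it is enough to treat a finite combination $g=\sum_{j}c_{j}\phi_{j}$ with $U\phi_{j}=e^{2\pi i\alpha_{j}}\phi_{j}$, and then
\[
\frac1N\sum_{n=1}^{N}g(T^{n}x)e^{2\pi int}=\sum_{j}c_{j}\,\phi_{j}(x)\,\frac1N\sum_{n=1}^{N}e^{2\pi in(\alpha_{j}+t)}
\]
converges for every $t$ at each $x$ where the finitely many values $\phi_{j}(x)$ are finite; letting $g\to f_{d}$ in $L^{2}$ and controlling the remainder by $(|f_{d}-g|)^{*}$ kills the oscillation uniformly in $t$, a.e.

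The \emph{main obstacle} is the continuous part, where I must show $\sup_{t}\big|\frac1N\sum_{n}f_{c}(T^{n}x)e^{2\pi int}\big|\to0$ for a.e.\ $x$. Applying van der Corput's inequality to $u_{n}=f_{c}(T^{n}x)e^{2\pi int}$ is the crux: since $u_{n+h}\overline{u_{n}}=g_{h}(T^{n}x)\,e^{2\pi iht}$ with $g_{h}=(f_{c}\circ T^{h})\overline{f_{c}}$, the modulus of every correlation term is \emph{independent of} $t$, so taking $\sup_{t}$ and then $N\to\infty$ (using Birkhoff's theorem for $|f_{c}|^{2}$ and for each $g_{h}$, whose averages tend to $\int g_{h}\,d\mu=\widehat{\sigma_{f_{c}}}(h)$) yields, for a.e.\ $x$ and every $H\ge1$,
\[
\limsup_{N\to\infty}\ \sup_{t\in[0,1)}\left|\frac1N\sum_{n=1}^{N}f_{c}(T^{n}x)e^{2\pi int}\right|^{2}\le \frac{C}{H}\,\norm{f_{c}}_{2}^{2}+\frac{C}{H}\sum_{h=1}^{H}\left|\widehat{\sigma_{f_{c}}}(h)\right|.
\]
As $H\to\infty$ the first term vanishes, and so does the second: by Cauchy--Schwarz $\frac1H\sum_{h=1}^{H}|\widehat{\sigma_{f_{c}}}(h)|\le(\frac1H\sum_{h=1}^{H}|\widehat{\sigma_{f_{c}}}(h)|^{2})^{1/2}$, which tends to $0$ by Wiener's lemma because $\sigma_{f_{c}}$ has no atoms. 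Intersecting the (countably many) full-measure sets arising from the three steps produces the desired $X_{f}$. The delicate point throughout is exactly that van der Corput trades a uniform-in-$t$ assertion for finitely many ordinary Birkhoff averages, which is what makes a single, $t$-independent exceptional null set attainable.
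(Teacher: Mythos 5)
Your overall architecture (maximal inequality plus Banach principle, splitting $f$ into the Kronecker part $f_d$ and the spectrally continuous part $f_c$, van der Corput with $t$-independent correlation terms, Wiener's lemma) is the classical route to Wiener--Wintner, and the paper itself offers no proof to compare against — it cites the literature. But your treatment of the continuous part has a genuine gap: you assert that the Birkhoff averages $\frac1N\sum_{n=1}^N g_h(T^nx)$, with $g_h=(f_c\circ T^h)\overline{f_c}$, converge a.e.\ to $\int g_h\,d\mu=\widehat{\sigma_{f_c}}(h)$. That identification holds only when $T$ is \emph{ergodic}; the theorem as stated is for an arbitrary measure-preserving system, and in general Birkhoff gives the conditional expectation $\EE[g_h\,|\,\mathcal{I}](x)$, which depends on $x$ and need not be small even when the global spectral measure $\sigma_{f_c}$ is continuous.

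Worse, the intermediate claim you are trying to prove — that $\sup_t\bigl|\frac1N\sum_{n=1}^N f_c(T^nx)e^{2\pi int}\bigr|\to0$ a.e.\ whenever $f_c$ is orthogonal to the closed span of the eigenfunctions — is \emph{false} without ergodicity; the paper explicitly remarks that "the uniformity does not need to hold if $T$ is not ergodic." A concrete counterexample: take $X=\TT^2$, $T(x,y)=(x,y+x)$, $f(x,y)=e^{2\pi iy}$. The global eigenfunctions are exactly the functions of $x$ alone, so $f=f_c$ and $\sigma_f$ is Lebesgue measure (continuous); yet $g_h(x,y)=e^{2\pi ihx}$ is $T$-invariant, so $\EE[g_h|\mathcal{I}]$ has modulus $1$, and indeed
\begin{equation*}
\sup_{t\in[0,1)}\left|\frac1N\sum_{n=1}^N f(T^n(x,y))e^{2\pi int}\right| \ \geq\ \left|e^{2\pi iy}\,\frac1N\sum_{n=1}^N e^{2\pi in(x+t)}\right|_{\,t\equiv -x} \ =\ 1
\end{equation*}
for every $N$. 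So your dichotomy (global Kronecker versus globally continuous spectrum) is the wrong decomposition in the non-ergodic setting: orthogonality to global eigenfunctions does not imply that the fiberwise spectral measures over the ergodic components are continuous. Your argument proves the theorem for ergodic $T$ (where it is correct, and where the uniform conclusion for $f_c$ is exactly Bourgain's uniform Wiener--Wintner statement), but to get the stated theorem you must either reduce to the ergodic case by ergodic decomposition — taking care that the exceptional null sets, defined by countably many measurable conditions per component, assemble into a single measurable null set — or run the whole argument conditionally on $\mathcal{I}$, decomposing $f$ relative to the fiberwise Kronecker factor and using a conditional form of Wiener's lemma; the fiberwise eigenfunction part then needs a separate (non-uniform) convergence argument, since eigenvalues vary from component to component.
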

 The original proof of this theorem by N. Wiener and A. Wintner contained errors, but the argument was later fixed by multiple scholars, including H. Furstenberg \cite{Furstenberg_processes}. The Wiener-Wintner theorem played a key role in J. Bourgain's double recurrence theorem \cite{BourgainDR}. In fact, Bourgain announced that when the transformation is ergodic, then the uniform version of the Wiener-Wintner theorem holds.
 \begin{theorem}[Uniform Wiener-Wintner ergodic theorem]\label{uniformWW}
 Let $(X, \mathcal{F}, \mu, T)$ be an \textbf{ergodic} system, and $f \in L^1(\mu)$. Then the following statements are equivalent.
 \begin{enumerate}
 \item The function $f$ belongs to the orthogonal complement of the closed linear span of the eigenfunctions of $T$.
 \item We have
 \[ \lim_{N \to \infty} \sup_{t \in [0, 1)} \left|\frac{1}{N} \sum_{n=1}^N f(T^nx)e^{2\pi int} \right| = 0. \]
 \end{enumerate}
 \end{theorem}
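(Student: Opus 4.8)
The plan is to prove the two implications separately, treating $(2)\Rightarrow(1)$ as the easy direction and $(1)\Rightarrow(2)$ as the substantive one. For the contrapositive of $(2)\Rightarrow(1)$, suppose $f$ is \emph{not} orthogonal to the eigenfunctions, so there is a normalized eigenfunction $\phi$ with $\phi\circ T=e^{2\pi i t_0}\phi$ and $\langle f,\phi\rangle\neq 0$. Since the system is ergodic, $|\phi|\equiv 1$, so $f(T^nx)e^{-2\pi i n t_0}=f(T^nx)\overline{\phi(T^nx)}\,\phi(x)$, and Birkhoff's theorem applied to $f\overline\phi$ gives $\frac1N\sum_{n=1}^N f(T^nx)e^{-2\pi i n t_0}\to\langle f,\phi\rangle\,\phi(x)$, which is nonzero for a.e.\ $x$. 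Testing the average at the single frequency $t\equiv -t_0\pmod 1$ therefore shows that the supremum over $t$ cannot tend to $0$ on a set of positive measure; hence $(2)$ forces $f$ into the orthocomplement of the eigenfunctions.

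For $(1)\Rightarrow(2)$, write $S_N(x,t)=\frac1N\sum_{n=1}^N f(T^nx)e^{2\pi int}$ and apply the van der Corput inequality to the sequence $a_n=f(T^nx)e^{2\pi int}$. The crucial feature is that the cross term $a_{n+h}\overline{a_n}=f(T^{n+h}x)\overline{f(T^nx)}\,e^{2\pi iht}$ depends on $t$ only through the unimodular factor $e^{2\pi iht}$, so the resulting estimate can be made uniform in $t$: for each fixed $H$ and all large $N$, up to boundary terms of order $h/N$,
\[
\sup_{t\in[0,1)}|S_N(x,t)|^2 \le \frac{C}{H}\cdot\frac1N\sum_{n=1}^N|f(T^nx)|^2 \;+\; \frac{C}{H}\sum_{h=1}^{H}\Bigl|\frac1N\sum_{n=1}^{N}g_h(T^nx)\Bigr|,
\]
where $g_h=(f\circ T^h)\,\overline f$. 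The first term is controlled by Birkhoff's theorem and stays bounded; each averaging operator in the sum converges, again by Birkhoff, to $\int g_h\,d\mu=\langle U^h f,f\rangle$, with $U$ the Koopman operator. Intersecting the countably many Birkhoff null sets (one for each $h\in\NN$ and one for $|f|^2$) into a single set of full measure and taking $\limsup_{N\to\infty}$ gives, for a.e.\ $x$,
\[
\limsup_{N\to\infty}\ \sup_{t\in[0,1)}|S_N(x,t)|^2 \ \le\ \frac{C}{H}\sum_{h=1}^{H}\bigl|\langle U^h f,f\rangle\bigr|.
\]

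It remains to let $H\to\infty$. By the spectral theorem the correlations are the Fourier coefficients of the spectral measure $\sigma_f$ of $f$, namely $\langle U^h f,f\rangle=\int_{[0,1)} e^{2\pi i h\theta}\,d\sigma_f(\theta)$, and hypothesis $(1)$ is precisely the statement that $\sigma_f$ has no atoms. By Cauchy--Schwarz followed by Wiener's lemma,
\[
\frac1H\sum_{h=1}^{H}\bigl|\langle U^h f,f\rangle\bigr| \le \Bigl(\frac1H\sum_{h=1}^{H}\bigl|\langle U^h f,f\rangle\bigr|^2\Bigr)^{1/2} \xrightarrow[H\to\infty]{} \Bigl(\sum_{\theta}\sigma_f(\{\theta\})^2\Bigr)^{1/2}=0,
\]
so the right-hand side of the previous display vanishes and $\sup_t|S_N(x,t)|\to0$ for a.e.\ $x$, which is $(2)$. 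I expect the main obstacle to be the uniform-in-$t$ van der Corput step: one must verify that the frequency $t$ enters only through unimodular weights, so that a single estimate dominates the entire supremum, and one must organize the countably many exceptional Birkhoff null sets into one set of full measure \emph{before} passing to the supremum over the continuum of frequencies $t$.
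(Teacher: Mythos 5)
Your strategy for $(1)\Rightarrow(2)$ --- van der Corput's inequality applied to $a_n=f(T^nx)e^{2\pi int}$, whose right-hand side is independent of $t$ and therefore dominates the supremum, followed by Wiener's lemma for the spectral measure $\sigma_f$ --- is exactly the classical route; note that the paper itself gives no proof of Theorem \ref{uniformWW}, only a citation to \cite[Chapter 2]{AssaniWWET}, and the argument there is this one. Your direction $(2)\Rightarrow(1)$ is also correct: ergodicity forces eigenfunctions to be unimodular, so testing at the single frequency $-t_0$ and applying Birkhoff to $f\overline{\phi}\in L^1$ produces the nonzero limit $\langle f,\phi\rangle\phi(x)$ a.e.

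There is, however, one genuine gap: your proof of $(1)\Rightarrow(2)$ only works for $f\in L^2(\mu)$, whereas the theorem is stated for $f\in L^1(\mu)$. For $f\in L^1\setminus L^2$ every ingredient of your main step breaks down: $g_h=(f\circ T^h)\,\overline f$ need not be integrable, so Birkhoff's theorem does not apply to it and $\langle U^hf,f\rangle$ need not exist; the Koopman operator is not unitary on $L^1$, so there is no spectral measure and no Wiener lemma; and the term $\frac{C}{H}\cdot\frac1N\sum_{n=1}^N|f(T^nx)|^2$ does \emph{not} stay bounded, since for $|f|^2\notin L^1$ the Birkhoff averages of $|f|^2$ tend to $+\infty$ a.e., making your inequality vacuous. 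The missing step is the standard truncation argument: prove the statement for bounded $f$ exactly as you did, then, given $f\in L^1$ satisfying (1) (equivalently $\EE[f\,|\,\mathcal{K}]=0$, where $\mathcal{K}$ is the Kronecker factor), set $f_M=f\,\ind_{\{|f|\le M\}}$ and $h_M=f_M-\EE[f_M\,|\,\mathcal{K}]$, which is bounded and still orthogonal to every eigenfunction. Then
\[
\sup_{t\in[0,1)}\left|\frac1N\sum_{n=1}^N f(T^nx)e^{2\pi int}\right|\le \sup_{t\in[0,1)}\left|\frac1N\sum_{n=1}^N h_M(T^nx)e^{2\pi int}\right|+\frac1N\sum_{n=1}^N|f-h_M|(T^nx),
\]
where the last average converges a.e.\ to $\|f-h_M\|_1\le 2\|f-f_M\|_1$ by Birkhoff (using that conditional expectation is an $L^1$-contraction); letting $M\to\infty$ along a countable sequence finishes the proof. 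Without this reduction, what you have proved is the $L^2$ (or $L^\infty$) version of the theorem, not the stated one.
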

 We note that the uniformity does not need to hold if $T$ is not ergodic. The proof of the Wiener-Wintner as well as the uniform Wiener-Wintner theorems can be found in \cite[Chapter 2]{AssaniWWET}. In 2014, the author, D. Duncan, and R. Moore generalized Theorems $\ref{WW}$ and $\ref{uniformWW}$ for the double recurrence averages by using Bourgain's result \cite{BourgainDR}.
 \begin{theorem}[Double recurrence Wiener-Wintner ergodic theorem, \cite{WWDR}]\label{WWDRThm}
 	Let $(X, \mathcal{F}, \mu, T)$ be a measure-preserving system, and $f_1, f_2 \in L^\infty(\mu)$. Then there exists a set of full measure $X_{f_1, f_2}$ such that for any $x \in X_{f_1, f_2}$, for any nonzero distinct integers $a$ and $b$, and for any $t \in [0, 1)$, the averages
 	\[ W_N(f_1, f_2, x, t) = \frac{1}{N} \sum_{n=1}^N f_1(T^{an}x) f_2(T^{bn}x) e^{2\pi int}\]
 	converge. Furthermore, if $T$ is ergodic, and if either $f_1$ or $f_2$ belongs to the orthogonal complement of the second Host-Kra-Ziegler factor $\mathcal{Z}_2$, we have
 	\[\lim_{N \to \infty} \sup_{t \in [0, 1)} |W_N(f_1, f_2, x, t)| = 0.\]
 \end{theorem}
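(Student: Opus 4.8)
The plan is to combine J.\ Bourgain's pointwise double recurrence theorem with a van der Corput argument that isolates the modulation $e^{2\pi int}$, and then to feed in the Host--Kra--Ziegler structure theory to control the resulting limits. Throughout, the mechanism that produces a single null set valid for \emph{all} $t$ simultaneously is that $|e^{2\pi int}|=1$, so every estimate coming out of van der Corput is uniform in $t$; thus the exceptional set never needs to be indexed by the uncountable parameter $t$.

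First I would set up the van der Corput step. Writing $u_n = u_n(x,t) = f_1(T^{an}x)f_2(T^{bn}x)e^{2\pi int}$, the van der Corput inequality bounds $\left|\frac1N\sum_{n=1}^N u_n\right|^2$ by an average over shifts $h$ of the correlations $\frac1N\sum_n u_{n+h}\overline{u_n}$. Since
\[ u_{n+h}\overline{u_n} = e^{2\pi iht}\, F_{1,h}(T^{an}x)\,F_{2,h}(T^{bn}x), \qquad F_{1,h} = (f_1\circ T^{ah})\,\overline{f_1},\quad F_{2,h} = (f_2\circ T^{bh})\,\overline{f_2}, \]
the factor $e^{2\pi iht}$ pulls out with modulus one. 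Applying Bourgain's theorem to the bounded functions $F_{1,h},F_{2,h}$ for each fixed $h$ gives, off a single null set (a countable union over $h$), the pointwise limits $c_h(x) = \lim_N \frac1N\sum_n F_{1,h}(T^{an}x)F_{2,h}(T^{bn}x)$. Taking $\limsup_N$ and then averaging over $h$ yields, uniformly in $t$,
\[ \limsup_{N\to\infty}\ \sup_{t\in[0,1)} \left|\frac1N\sum_{n=1}^N u_n\right|^2 \ \le\ C\,\limsup_{H\to\infty}\frac1H\sum_{h=1}^H |c_h(x)|. \]

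The main obstacle is the uniform ``furthermore'' estimate. Assuming $T$ ergodic and, say, $f_1$ orthogonal to $\mathcal{Z}_2$, I would show the right-hand side above vanishes a.e. The key is that $c_h(x)$, once averaged over $h$, is governed by a Gowers--Host--Kra seminorm of $f_1$: integrating in $x$ and invoking the characteristic-factor theory for the averages $\frac1N\sum_n g_1(T^{an}\cdot)g_2(T^{bn}\cdot)$ bounds $\frac1H\sum_h \int|c_h|\,d\mu$ by a power of a seminorm that vanishes exactly when $\EE[f_1\mid\mathcal{Z}_2]=0$. Combining this integrated estimate with a maximal-type or subsequence argument should upgrade the conclusion to $\frac1H\sum_{h=1}^H|c_h(x)|\to 0$ for a.e.\ $x$, giving the uniform decay $\sup_t|W_N|\to 0$. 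I expect this interchange --- passing from the a.e.\ limits $c_h(x)$ to the a.e.\ vanishing of their Cesàro average in $h$, while matching it to the vanishing seminorm --- to be the delicate point.

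Finally, for the convergence statement for arbitrary $f_1,f_2\in L^\infty(\mu)$, I would decompose each $f_j = \EE[f_j\mid\mathcal{Z}_2] + (f_j-\EE[f_j\mid\mathcal{Z}_2])$ and expand $W_N$ bilinearly into four terms, passing to the ergodic decomposition so that the factors $\mathcal{Z}_2$ are available componentwise. The three terms containing a factor orthogonal to $\mathcal{Z}_2$ tend to $0$ uniformly in $t$ by the estimate above, hence converge trivially. For the remaining term both factors are $\mathcal{Z}_2$-measurable; approximating them by functions measurable with respect to a $2$-step nilfactor, the sequence $n\mapsto \EE[f_1\mid\mathcal{Z}_2](T^{an}x)\,\EE[f_2\mid\mathcal{Z}_2](T^{bn}x)$ is, for a.e.\ $x$, a uniform limit of basic nilsequences read off the orbit of $(g^a,g^b)$ on the square of the relevant nilsystem; multiplying by $e^{2\pi int}$ keeps it a nilsequence, whose Cesàro averages converge for every $t$. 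Assembling the countably many null sets (over the pairs $a,b$, over $h$, and from the nilsequence approximation) produces the full-measure set $X_{f_1,f_2}$ on which convergence holds for all admissible $a,b$ and all $t$.
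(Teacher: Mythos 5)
You should first note that the paper itself does not prove Theorem \ref{WWDRThm}: it is quoted from \cite{WWDR} and used as a black box. The closest internal analogue is the proof of Lemma \ref{L4}, which runs exactly the strategy you describe --- van der Corput plus Bourgain's double recurrence theorem to produce the limits $c_h(x)$, the joining $\mu_x,\omega$ of Lemma \ref{L3} to convert them into integrals, and Gowers--Host--Kra seminorm estimates to kill the part orthogonal to the relevant Host--Kra--Ziegler factor --- while your treatment of the $\mathcal{Z}_2$-measurable part by nilsequence approximation matches the paper's Section 2.4. So your overall architecture is the correct one.

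However, there is a genuine gap at exactly the step you flag as delicate, and your proposed fix does not close it. From the integrated estimate you only get $\frac{1}{H}\sum_{h=1}^{H}|c_h(\cdot)|\to 0$ in $L^1(\mu)$, and no ``maximal-type or subsequence argument'' upgrades this to a.e.\ convergence: $L^1$-convergence of nonnegative Cesàro averages of a sequence bounded by $1$ does not imply a.e.\ convergence. (Take $|c_h|=\ind_{A_j}$ for all $h\in[2^j,2^{j+1})$, where the sets $A_j$ have $\mu(A_j)=1/j$ and sweep the space typewriter-fashion; then $\int \frac1H\sum_{h\le H}|c_h|\,d\mu\to 0$ while $\limsup_H \frac1H\sum_{h\le H}|c_h(x)|\ge 1/2$ a.e.) Reverse Fatou also points the wrong way: it gives $\int\limsup_H(\cdots)\,d\mu \ge \limsup_H\int(\cdots)\,d\mu$, whereas you need an upper bound on the left-hand side. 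What actually closes the gap, both in \cite{WWDR} and in this paper's Lemma \ref{L4}, is that the $H$-averages in question \emph{converge} a.e.: after a further Cauchy--Schwarz/van der Corput step the quantity acquires the cube-correlation structure to which Proposition 2.2 of \cite{HostKraUniformity} (Lemma \ref{L2}(1) here) applies, so $\limsup_H=\lim_H$ a.e.; only then does the dominated convergence theorem legitimately interchange $\lim_H$ and $\int$, and the vanishing of $\lim_H\int(\cdots)\,d\mu$ --- controlled by $\vertiii{f_1}_3$ or $\vertiii{f_2}_3$, which is zero precisely when the function is orthogonal to $\mathcal{Z}_2$ --- forces the nonnegative a.e.\ limit to vanish. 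You must supply this existence-of-the-limit step (or an equivalent device) for the ``furthermore'' part of your argument to be complete.
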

 We refer the readers to \cite{HostKraNEA} and \cite{Ziegler} for the constructions of the Host-Kra-Ziegler factors. We note that Theorem $\ref{WWDRThm}$ has been extended by the author and R. Moore to the polynomial Wiener-Wintner theorem. Polynomial Wiener-Wintner averages for a single function has been studied by E. Lesigne and N. Frantzikinakis. In 1990, Lesigne showed that given an ergodic system $(X, \mathcal{F}, \mu, T)$, and a function $f \in L^1(\mu)$, then there exists a set of full measure $X_f$ such that for any $x \in X_f$ and any polynomial $p$ with real coefficients, the averages
 \[ \frac{1}{N} \sum_{n=1}^N f(T^nx)e^{2\pi i p(n)}\]
 converge \cite{Lesigne90}. He later showed that if $T$ is totally ergodic (i.e. every power of $T$ is ergodic), and $f$ belongs to the orthogonal complement of the $k$-th Abramov factor, then these averages converge to $0$ \cite{Lesigne93}, which was later extended by Frantzikinakis that the averages converge to $0$ uniformly \cite{Fran06}. Frantzikinakis also showed that the assumption $T$ being totally ergodic cannot be dropped (i.e. $T$ cannot be just ergodic), or the uniformity does not hold. Using the Host-Kra-Ziegler factor, however, we can show that the uniformity indeed holds.
 \begin{theorem}[Polynomial double recurrence Wiener-Wintner theorem, \cite{WWDR_poly}]\label{WWDR_poly_thm}
 Let $(X, \mathcal{F}, \mu, T)$ be a measure-preserving system, and $f_1, f_2 \in L^\infty(\mu)$. Then there exists a set of full measure $X_{f_1, f_2}$ such that for any $x \in X_{f_1, f_2}$, for any nonzero distinct integers $a$ and $b$, and for any polynomial $p$ with real coefficients, the averages
  	\[ W_N(f_1, f_2, x, p) = \frac{1}{N} \sum_{n=1}^N f_1(T^{an}x) f_2(T^{bn}x) e^{2\pi ip(n)}\]
  	converge. Furthermore, if $T$ is ergodic, and if either $f_1$ or $f_2$ belongs to the orthogonal complement of the $k+1$-th Host-Kra-Ziegler factor $\mathcal{Z}_{k+1}$, and if $\RR_k[\xi]$ denotes the collection of all the degree-$k$ polynomials with real coefficients, we have
  	\[\lim_{N \to \infty} \sup_{p \in \RR_k[\xi]} |W_N(f_1, f_2, x, p)| = 0.\]
 \end{theorem}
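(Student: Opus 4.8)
My plan is to reduce the theorem to two separate tasks and to run an induction on the degree $k$, taking the linear double recurrence Wiener--Wintner theorem (Theorem \ref{WWDRThm}) as the base case $k=1$ (where $\mathcal{Z}_{k+1}=\mathcal{Z}_2$ and $\RR_k[\xi]=\RR_1[\xi]$ matches the linear phase $e^{2\pi int}$). The two tasks are: (i) prove, in the ergodic case, that the averages vanish uniformly over $\RR_k[\xi]$ when $f_1$ or $f_2$ is orthogonal to $\mathcal{Z}_{k+1}$; and (ii) prove convergence of the $\mathcal{Z}_{k+1}$-measurable part. Part (i) is the analytic heart and is where the induction lives; part (ii) is handled by equidistribution on nilsystems. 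The descent in (i) uses the van der Corput inequality: writing $u_n = f_1(T^{an}x)f_2(T^{bn}x)e^{2\pi i p(n)}$, van der Corput bounds $\limsup_N |\frac1N\sum_{n}u_n|^2$ by a Ces\`aro average over shifts $h$ of the averages $\limsup_N|\frac1N\sum_n g_1^{(h)}(T^{an}x)g_2^{(h)}(T^{bn}x)e^{2\pi i p_h(n)}|$, with $g_1^{(h)}=(f_1\circ T^{ah})\overline{f_1}$, $g_2^{(h)}=(f_2\circ T^{bh})\overline{f_2}$, and $p_h(n)=p(n+h)-p(n)\in\RR_{k-1}[\xi]$. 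Since $p_h$ drops to degree $k-1$ while $p$ ranges over all of $\RR_k[\xi]$, the supremum over $p$ is absorbed into the supremum over $\RR_{k-1}[\xi]$ furnished by the inductive hypothesis --- this is precisely how the \emph{uniform} character of the limit is preserved down to the base case.

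To close the induction in (i) I would transport the orthogonality hypothesis across the van der Corput step through the recursive definition of the Host--Kra seminorms. With $\vertiii{\cdot}_{d}$ denoting these seminorms, the vanishing $\vertiii{f}_{k+2}=0$ characterizes $f\perp\mathcal{Z}_{k+1}$, and the recursion expresses a power of $\vertiii{f}_{k+2}$ as a Ces\`aro average over $h$ of a power of $\vertiii{(f\circ T^{h})\overline{f}}_{k+1}$. Hence, if $f_i\perp\mathcal{Z}_{k+1}$ then $g_i^{(h)}\perp\mathcal{Z}_{k}$ for a set of $h$ of density one, and for such $h$ the inner averages vanish uniformly in $p_h$ by the inductive hypothesis; averaging over $h$ and invoking the van der Corput bound yields $\lim_N\sup_{p\in\RR_k[\xi]}|W_N(f_1,f_2,x,p)|=0$ almost everywhere. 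The care required here is to choose one exceptional null set that serves the countably many shifts $h$ and integer pairs $(a,b)$ simultaneously, and to justify interchanging $\sup_p$ with $\limsup_N$ and the $h$-average, which is legitimate because the van der Corput estimate is valid for all $p$ at once.

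For task (ii) and for the pointwise statement in a general (not necessarily ergodic) system, I would pass to the ergodic decomposition and split $f_i=f_i'+f_i''$ with $f_i'=\EE(f_i\mid\mathcal{Z}_{k+1})$ and $f_i''\perp\mathcal{Z}_{k+1}$. Every product term carrying a factor $f_i''$ converges to $0$ by part (i), so only the fully $\mathcal{Z}_{k+1}$-measurable term remains. On $\mathcal{Z}_{k+1}$ the system is an inverse limit of $(k+1)$-step nilsystems, and on a nilsystem $G/\Gamma$ the sequence $n\mapsto(g^{an}\Gamma,g^{bn}\Gamma,e^{2\pi i p(n)})$ is a polynomial orbit on a product nilmanifold; Leibman's equidistribution theorem then gives convergence of the averages of a continuous observable for every point whose orbit equidistributes, with a density argument over a countable dense family of polynomials together with a uniform-in-$p$ oscillation bound (again from van der Corput) passing from that family to every $p\in\RR_k[\xi]$, and a routine approximation handling both the passage from continuous to $L^\infty$ observables and the inverse limit. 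Intersecting the full-measure sets produced for the two parts yields the desired set $X_{f_1,f_2}$.

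The step I expect to be the main obstacle is the bookkeeping of quantifiers in the van der Corput descent together with the seminorm transfer: I must certify a single null set valid for every $p$, every shift $h$, and every admissible pair $(a,b)$ at once, and I must verify that the Host--Kra recursion really delivers $\mathcal{Z}_{k}$-orthogonality of the shifted products $g_i^{(h)}$ on a density-one set of $h$ after the $a$- and $b$-dilations are taken into account. Once the base case supplies uniformity over $\RR_1[\xi]$, the inductive mechanism is essentially formal; the genuinely delicate point is the \emph{simultaneity} of all the estimates in $p$, which is exactly what upgrades the conclusion from convergence for each fixed $p$ to a uniform limit over $\RR_k[\xi]$.
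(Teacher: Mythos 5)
You should first note that this paper does not actually prove Theorem \ref{WWDR_poly_thm}: it is quoted from \cite{WWDR_poly}, and within this paper it only appears as the target of the implication $(1)\Rightarrow(2)$ of Theorem \ref{equivalence}, whose proof is delegated to that reference. So the comparison is with the cited proof and with the analogous machinery this paper does develop (Lemma \ref{L4}). Your overall architecture --- van der Corput descent in the polynomial degree down to the linear case (Theorem \ref{WWDRThm}), plus an equidistribution/nilsystem argument for the $\mathcal{Z}_{k+1}$-measurable part --- is the right skeleton and matches the spirit of the known proof. But there is a genuine gap at the step you yourself identify as the crux: the transfer of orthogonality across the van der Corput step. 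You claim that $f_i\perp\mathcal{Z}_{k+1}$ (i.e.\ $\vertiii{f_i}_{k+2}=0$) forces $g_i^{(h)}=(f_i\circ T^{a h})\overline{f_i}$ to be orthogonal to $\mathcal{Z}_{k}$ for a density-one set of $h$. This is false. The Host--Kra recursion
\begin{equation*}
\vertiii{f}_{k+2}^{2^{k+2}} \;=\; \lim_{H}\frac{1}{H}\sum_{h=0}^{H-1}\vertiii{(f\circ T^{h})\overline{f}}_{k+1}^{2^{k+1}}
\end{equation*}
only gives that the seminorms $\vertiii{(f\circ T^{h})\overline{f}}_{k+1}$ tend to $0$ \emph{in density}; they need not vanish for even a single $h$ (generically they do not). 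Since your inductive hypothesis is qualitative --- it applies only to functions \emph{exactly} orthogonal to $\mathcal{Z}_k$ --- it cannot be invoked for the shifted products, and the induction does not close.

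The repair changes the structure of the induction: the inductive statement must be quantitative, an a.e.\ (or integrated) inequality of the shape
\begin{equation*}
\int \limsup_N \sup_{p\in\RR_{k}[\xi]}\bigl|W_N(f_1,f_2,x,p)\bigr|^2\,d\mu \;\lesssim_{a,b}\; \min_{i=1,2}\vertiii{f_i}_{k+2}^{c}
\end{equation*}
for some power $c>0$, so that \emph{smallness} (not vanishing) of the shifted seminorms suffices and the Ces\`aro average over $h$ can be estimated directly and then sent to $\vertiii{f_i}_{k+2}$ by the recursion. This is exactly how \cite{WWDR_poly} argues, and it is also the mechanism visible in this paper's Lemma \ref{L4}, whose chain of estimates ends with a bound by $\frac{1}{K}\sum_{k=1}^{K}\min\left(\vertiii{g_1\cdot g_1(T^{ak})}_3,\vertiii{g_2\cdot g_2(T^{bk})}_3\right)$ and only then passes to the limit in $K$ to obtain $\min\left(\vertiii{g_1}_4,\vertiii{g_2}_4\right)$. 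Two further points need attention in the quantitative version: the shifts produced by van der Corput are $T^{ah}$ and $T^{bh}$, not $T^{h}$, so the recursion produces seminorms along the dilated progressions (you flag this but do not resolve it --- it is handled by the seminorm/factor comparisons for powers of $T$, again at the quantitative level); and the uniformity in $p$ must be carried by a single null set for all $h$, $k$, and $(a,b)$, which the integrated formulation provides automatically. Your part (ii) (Leibman equidistribution on the nilfactor plus approximation) is essentially sound and parallels this paper's Section 2.4.
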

 
   The interest in nilsequences appears in several papers linked to problems in number theory see for instance the papers by B. Green and T. Tao , \cite{GreenTaoPrimesAP}, \cite{GreenTaoQuadratic}, \cite{GreenTaoPrimesLinear} and the paper by V. Bergelson, B. Host and B. Kra \cite{BHK_nilseq}.  Our interest in these sequences comes from the simple observation that sequences of the form $e^{2\pi in t}$ or $e^{2\pi i P(n)}$ where $P$ is a real polynomial with integer coefficients, are nilsequences and that these weights were used to obtain Wiener Wintner extension of J. Bourgain result in \cite{WWDR} and \cite{WWDR_poly}.
   One of our main results, which we apply Bourgain's double recurrence theorem \cite{BourgainDR} to prove, is the following:
   \begin{theorem}\label{mainResult}
   Let $(X, \mathcal{F}, \mu, T)$ be a system and $f_1$, $f_2$ bounded measurable functions. There exists a set of full measure $X_{f_1, f_2}$ such that for all $x\in X_{f_1, f_2}$ and for every nilsequence $b_n$ the averages \[ \frac{1}{N} \sum_{n=1}^N f_1(T^{an}x)f_2(T^{bn}x)b_n \]  converge.
   \end{theorem}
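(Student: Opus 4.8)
The plan is to deduce Theorem \ref{mainResult} from the polynomial double recurrence Wiener--Wintner theorem (Theorem \ref{WWDR_poly_thm})---itself a consequence of Bourgain's double recurrence theorem---which already provides, in a \emph{single} full-measure set and for \emph{every} real polynomial $p$, the convergence of the averages $W_N(f_1,f_2,x,p)$, hence of the averages weighted by $e^{2\pi i p(n)}$. I would simply take $X_{f_1,f_2}$ to be the set furnished by Theorem \ref{WWDR_poly_thm}. (We may discard the degenerate cases $a=b$ or $ab=0$: there the product $f_1(T^{an}x)f_2(T^{bn}x)$ collapses to a single function evaluated along one orbit, and the claim reduces to a single-function Wiener--Wintner statement.) Everything then rests on two elementary properties of this fixed set together with one structural fact about nilsequences.

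Fix $x\in X_{f_1,f_2}$ and write $c_n=f_1(T^{an}x)f_2(T^{bn}x)$, so that $|c_n|\le \|f_1\|_\infty\|f_2\|_\infty=:M$. By linearity the averages $\frac1N\sum_{n=1}^N c_n w_n$ converge for every finite linear combination $(w_n)$ of polynomial exponentials. Moreover, for bounded weights $(w_n),(w_n')$ one has $|\frac1N\sum_{n=1}^N c_n(w_n-w_n')|\le M\sup_n|w_n-w_n'|$ uniformly in $N$, so the assignment $(w_n)\mapsto \frac1N\sum_{n=1}^N c_n w_n$ is a contraction in the sup-norm of the weight. A routine $3\varepsilon$ argument then shows that the class of weights for which $\frac1N\sum_{n=1}^N c_n w_n$ converges is closed under uniform limits, and by the previous sentence it contains all finite linear combinations of polynomial exponentials.

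The heart of the matter is to show that every basic nilsequence $F(g^n\Gamma)$, with $F$ continuous, lies in the uniform closure of the linear span of the sequences $e^{2\pi i p(n)}$. I would first approximate $F$ uniformly by smooth functions on the compact nilmanifold $G/\Gamma$, and then expand a smooth $F$ by its vertical Fourier modes along the descending central series of $G$. Along the orbit $g^n\Gamma$ each such mode contributes a phase that is polynomial in $n$, of degree equal to its depth in the central series, times a factor depending only on the image of the orbit in a lower-step nilmanifold. Inducting on the nilpotency step---with the $1$-step (toral) base case being the ordinary Fourier expansion into the exponentials $e^{2\pi i nt}$ handled by Theorem \ref{WWDRThm}---and using that products of polynomial exponentials are again polynomial exponentials, this exhibits $F(g^n\Gamma)$ as a uniform limit of finite linear combinations of polynomial exponentials.

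Assembling the pieces: a general nilsequence is a uniform limit of basic ones; each basic one is a uniform limit of polynomial-exponential combinations; and each such combination has convergent averages on $X_{f_1,f_2}$. The contraction property carries convergence through both uniform limits, on the one set $X_{f_1,f_2}$ and for all nilsequences at once. The step I expect to be the main obstacle is the structural reduction above: one must control the nilsequence \emph{uniformly in $n$}, not merely in the mean or equidistribution sense, and verify that the central-series induction genuinely terminates in polynomial-exponential weights for continuous $F$. If that uniform reduction is unavailable in the clean form stated, the fallback is to run the induction on the step directly inside the averages, using a van der Corput inequality to replace a degree-$k$ phase by linear phases of the differenced sequences $c_{n+h}\overline{c_n}$ and invoking Theorem \ref{WWDRThm} at the bottom level; but it is the clean approximation that makes the deduction from Theorem \ref{WWDR_poly_thm} immediate.
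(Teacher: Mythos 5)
Your soft reduction steps are fine: the averaging operator is a sup-norm contraction in the weight, so the class of admissible weights is closed under uniform limits, and this is exactly how the paper, too, passes from basic nilsequences to general ones. The fatal problem is the structural claim on which everything else rests: it is \emph{false} that every basic nilsequence $F(g^n\Gamma)$ lies in the uniform closure of the linear span of the polynomial exponentials $e^{2\pi i p(n)}$. This already fails at step $2$. On the Heisenberg nilmanifold, a continuous $F$ with a nontrivial vertical frequency produces orbit sequences governed by bracket-polynomial phases such as $e^{2\pi i \lfloor n\alpha\rfloor n\beta}$, and such a sequence $b_n$ satisfies $\lim_N \frac{1}{N}\sum_{n=1}^N b_n e^{-2\pi i p(n)} = 0$ for every real polynomial $p$ (vertical-character orthogonality plus equidistribution), while $\lim_N \frac{1}{N}\sum_{n=1}^N |b_n|^2 = \int |F|^2 \, dm > 0$. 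If $\sup_n |b_n - \sum_{i=1}^m c_i e^{2\pi i p_i(n)}| < \epsilon$, then $\int |F|^2 \, dm \le \epsilon \|F\|_\infty$, a contradiction for small $\epsilon$. In your vertical-Fourier induction, a depth-$k$ mode evaluated along $g^n\Gamma$ yields a nilcharacter (a bracket-polynomial phase), not a polynomial phase times a lower-step term, so the induction never terminates in polynomial exponentials. This gap is not incidental: it is precisely the gap between statements (2) and (3) of Theorem \ref{equivalence}, and if your claim were true the whole of Section 2 of the paper would be an unnecessary two-line formality. It is also the same phenomenon behind the paper's remark that the Abramov factors (which govern polynomial phases in the totally ergodic case) must be replaced by the Host--Kra--Ziegler factors $\mathcal{Z}_k$ when nilsequence weights enter.

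Your fallback also aims the differencing at the wrong object: van der Corput applied to the weight does not reduce a general nilsequence to polynomial or linear phases, so you cannot bottom out at Theorem \ref{WWDRThm} that way. The paper instead differences the dynamical sequence and uses the nilsequence only as a black box. Concretely, it decomposes $f_1, f_2$ along $\mathcal{Z}_{k-1}$; when one projection lies in $\mathcal{Z}_{k-1}^{\perp}$ it proves (Lemma \ref{L4}), via the joining of Lemma \ref{L3}, cube-average estimates and van der Corput applied to $a_n = g_1(T^{an}x)g_2(T^{bn}x)$, that the local Host--Kra seminorm $\||(a_n)|\|_k$ vanishes for $\mu$-a.e.\ $x$, whereupon Host--Kra's Corollary 2.14 (Lemma \ref{L2}) forces $\frac{1}{N}\sum_n a_n b_n \to 0$ for every $(k-1)$-step nilsequence $b_n$; and when both functions lie in $\mathcal{Z}_{k-1}$ it observes that $f(T^{an}x)g(T^{bn}x)b_n$ is itself a nilsequence, whose averages converge by unique ergodicity. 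None of this machinery can be bypassed by an approximation argument from Theorem \ref{WWDR_poly_thm}.
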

  Thus the answer to B. Weiss question is positive. Furthermore, combining all the results from \cite{WWDR}, \cite{WWDR_poly}, \cite{NewUnivWeight_Norm}, and Theorem $\ref{mainResult}$, the following statements are known to be equivalent. In particular, all of them can be deduced from the double recurrence Wiener-Wintner theorem (Theorem $\ref{WWDRThm}$). 
 \begin{theorem}\label{equivalence}
 Let $(X, \mathcal{F}, \mu, T)$ be a measure-preserving system, and $f_1, f_2 \in L^\infty(\mu)$. Then the following statements are equivalent.
 \begin{enumerate}
 \item There exists a set of full measure $X_{f_1, f_2}^1$ such that for any $x \in X_{f_1, f_2}^1$, for any nonzero distinct integers $a$ and $b$, and for any  $t \in [0, 1)$, the averages
   	\[ \frac{1}{N} \sum_{n=1}^N f_1(T^{an}x) f_2(T^{bn}x) e^{2\pi int}\]
   	converge.
 \item There exists a set of full measure $X_{f_1, f_2}^2$ such that for any $x \in X_{f_1, f_2}^2$, for any nonzero distinct integers $a$ and $b$, and for any polynomial $p$ with real coefficients, the averages
   	\[ \frac{1}{N} \sum_{n=1}^N f_1(T^{an}x) f_2(T^{bn}x) e^{2\pi i p(n)}\]
   	converge.
  \item There exists a set of full measure $X_{f_1, f_2}^3$ such that for all $x\in X_{f_1, f_2}^3$, for any nonzero distinct integers $a$ and $b$, and for every nilsequence $b_n$ the averages \[ \frac{1}{N} \sum_{n=1}^N f_1(T^{an}x)f_2(T^{bn}x)b_n \] converge.
  \item There exists a set of full measure $X_{f_1, f_2}^4$ such that for any $x \in X_{f_1, f_2}^4$, for any nonzero distinct integers $a$ and $b$, and for any other measure-preserving system $(Y, \mathcal{G}, \nu, S)$ and functions $g_1, g_2, \ldots, g_k$, the averages
  \[ \frac{1}{N} \sum_{n=1}^N f_1(T^{an}x)f_2(T^{bn}x) \prod_{i=1}^k g_i \circ S^{in} \]
  converge in $L^2(\nu)$.
 \end{enumerate}
 \end{theorem}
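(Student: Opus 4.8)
The plan is to prove the equivalence by a short cycle of implications arranged so that the trivial nesting of the weight classes carries the ``downward'' directions, Theorem \ref{mainResult} carries the one genuinely analytic step, and the universal-weight theory splices in statement $(4)$; no implication reproves Bourgain's theorem from scratch. I would first record the trivial directions $(3)\Rightarrow(2)\Rightarrow(1)$. The weight $e^{2\pi int}$ is a $1$-step basic nilsequence (take $G=\RR$, $\Gamma=\ZZ$, $g=t$ and $F(x)=e^{2\pi ix}$ on $\TT=\RR/\ZZ$, so that $F(g^n\Gamma)=e^{2\pi int}$), and for a real polynomial $p$ the phase $e^{2\pi ip(n)}$ is a nilsequence, being realized as a polynomial orbit on a suitable nilmanifold \cite{HostKraUniformity}. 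Hence the weight class in $(1)$ is contained in that of $(2)$, which is contained in that of $(3)$; applying the convergence asserted in $(3)$ to these special weights yields $(2)$, and applying $(2)$ to linear phases yields $(1)$, with one full-measure set serving throughout.

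For the reverse direction I would invoke Theorem \ref{mainResult} to obtain $(1)\Rightarrow(3)$: its construction produces, from the pointwise convergence of the exponential averages $W_N(f_1,f_2,x,t)$ of $(1)$ together with the uniform estimate of Theorem \ref{WWDRThm} on the orthogonal complement of $\mathcal{Z}_2$, a single full-measure set on which $\frac1N\sum_{n=1}^N f_1(T^{an}x)f_2(T^{bn}x)b_n$ converges for every nilsequence $b_n$. Combined with the previous paragraph this already closes the loop $(1)\Leftrightarrow(2)\Leftrightarrow(3)$, the only substantial input being Theorem \ref{mainResult}.

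It remains to splice in $(4)$. The implication $(4)\Rightarrow(1)$ is immediate: fixing $x$ in the full-measure set of $(4)$ and writing $w_n=f_1(T^{an}x)f_2(T^{bn}x)$, for each $t$ take the auxiliary system to be the rotation by $t$ on $\TT$ with Lebesgue measure, $k=1$, and $g_1(y)=e^{2\pi iy}$; then $\frac1N\sum_{n=1}^N w_n\,g_1(S^ny)=e^{2\pi iy}\,\frac1N\sum_{n=1}^N w_n e^{2\pi int}$, so $L^2(\nu)$ convergence of the left side is equivalent to convergence of the scalar average in $(1)$. For $(3)\Rightarrow(4)$ I would argue as follows. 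By the Host--Kra--Ziegler structure theorem \cite{HostKraNEA}, \cite{Ziegler} the averages $\frac1N\sum_{n=1}^N\prod_{i=1}^k g_i\circ S^{in}$ have an inverse limit of nilsystems as characteristic factor, so after the appropriate reduction one may assume $(Y,\mathcal{G},\nu,S)$ is a nilsystem $G/\Gamma$ with $S$ the translation by some $\beta\in G$. For such a system and each fixed $y$, the sequence $n\mapsto\prod_{i=1}^k g_i(\beta^{in}y)$ is itself a basic nilsequence, equal to $\tilde F(\tilde\beta^{\,n}\cdot(y,\dots,y))$ on the product nilmanifold $(G/\Gamma)^k$ with $\tilde\beta=(\beta,\beta^2,\dots,\beta^k)$ and $\tilde F(y_1,\dots,y_k)=\prod_i g_i(y_i)$. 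Statement $(3)$ then gives, for a.e.\ $y$, convergence of $\frac1N\sum_{n=1}^N w_n\prod_i g_i(\beta^{in}y)$; since these partial averages are bounded by $\prod_i\|g_i\|_\infty$, the bounded convergence theorem upgrades the pointwise convergence to convergence in $L^2(\nu)$, which is $(4)$.

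The main obstacle is precisely the passage $(3)\Rightarrow(4)$, and within it the characteristic-factor reduction carrying the bounded weight $w_n$: a general bounded weight can alter the characteristic factors of the multiple averages, so one cannot merely quote the unweighted Host--Kra theorem. Controlling this is exactly the content of the universal-weight analysis of \cite{NewUnivWeight_Norm}, where a generalized von Neumann estimate shows that the ``uniform'' part of $w_n$ contributes nothing to the averages while the structured part is governed by correlations with nilsequences as in $(3)$. I would therefore lean on \cite{NewUnivWeight_Norm} for the rigorous reduction and present the nilsequence-representation computation above as the mechanism by which nilsequence convergence feeds into norm convergence of the multiple averages, completing the cycle $(1)\Rightarrow(3)\Rightarrow(4)\Rightarrow(1)$ alongside $(3)\Rightarrow(2)\Rightarrow(1)$.
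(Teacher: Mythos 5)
Your proposal is correct, and its core inputs coincide with the paper's: Theorem \ref{mainResult} for $(1)\Rightarrow(3)$, the torus-rotation specialization for $(4)\Rightarrow(1)$, and the universal-weight theorem of \cite{NewUnivWeight_Norm} to bring statement $(4)$ into the equivalence. The organization, however, is genuinely different. The paper proves a hub of pairwise equivalences centered at $(1)$: it cites \cite{WWDR_poly} for $(1)\Rightarrow(2)$, Theorem \ref{mainResult} for $(1)\Rightarrow(3)$, and \cite{NewUnivWeight_Norm} for $(1)\Rightarrow(4)$, each paired with a trivial reverse specialization. You instead run a cycle in which $(2)$ is obtained from $(3)$ purely by nesting of weight classes, using the fact that $e^{2\pi i p(n)}$ is a nilsequence for any real polynomial $p$ (a fact the paper itself records in its introduction); this makes the polynomial double recurrence Wiener--Wintner theorem of \cite{WWDR_poly} unnecessary as an input, which is a genuine economy --- the only analytic inputs left are Theorem \ref{mainResult} and \cite{NewUnivWeight_Norm}. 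On the other side, your treatment of $(3)\Rightarrow(4)$ is more elaborate than it needs to be: the structure-theorem and nilsequence-representation mechanism you sketch is essentially the internal argument of \cite{NewUnivWeight_Norm}, and since you must (as you correctly acknowledge) invoke that paper anyway for the weighted characteristic-factor reduction --- a general bounded weight can alter characteristic factors, so the unweighted Host--Kra theorem cannot simply be quoted --- you could just cite it for $(1)\Rightarrow(4)$ as the paper does, having already closed $(3)\Rightarrow(1)$. Presenting the mechanism as exposition is fine, but the logical content of that step is the citation, not your sketch.
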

 Certainly, we can take $X_{f_1, f_2} = \bigcap_{i=1}^4 X_{f_1, f_2}^i$, and this will be the universal set of full measure that satisfies all of (1) through (4).
 \begin{proof}[Proof of Theorem $\ref{equivalence}$] We consider the following cases.
 \begin{itemize}
 \item \underline{(1) $\Leftrightarrow$ (2)}: The forward implication is proved in \cite{WWDR_poly}. The reverse implication is immediate by setting $p(n) = tn$ for any $t \in [0, 1)$.
 \item \underline{(1) $\Leftrightarrow$ (3)}: The forward implication is Theorem $\ref{mainResult}$, which is proved in this paper. The reverse implication is immediate since $e^{2\pi int}$ is a one-step nilsequence.
 \item \underline{(1) $\Leftrightarrow$ (4)}: The forward implication is proved in \cite{NewUnivWeight_Norm}. The reverse implication can be shown by setting $Y = \TT$, $\mathcal{G}$ to be the Borel $\sigma$-algebra of $\TT$, where $\nu$ is a Borel probability measure on $\TT$, and $S = R_t$ is a rotation by $t$ (i.e. $R_t(e^{2\pi i \alpha}) = e^{2\pi i (\alpha+t)}$), and $g(y) = e^{2\pi i y}$. 
 \end{itemize}
 \end{proof}
\section{Proof of the main theorem}
   The main ingredients in the proof are:
   \begin{enumerate}
   \item Some properties of nilsequences given in \cite{HostKraUniformity}.
   \item Elements in the proof of the pointwise convergence of the averages along the cubes established in
   \cite{AssaniCubes}.
   \end{enumerate}
   \subsection{Preliminaries}
  

We extract from \cite{HostKraUniformity} some properties of nilsequences that we will be using. First we need some notations. Let $a_n$ be a bounded sequence of real numbers . For every $k\geq 1$, points of $\ZZ^k$ are written $h= (h_1,h_2, ..., h_k)$. For $\epsilon = (\epsilon_1, \epsilon_2, ..., \epsilon_k)\in \{0,1\}^k $ and $h= (h_1, h_2, ..., h_k) \in\ZZ^k$ , we define $|\epsilon| = \sum_{l=1}^k \epsilon_l$ and  we denote by  $\epsilon.h$ the dot product
$\sum_{l=1}^k \epsilon_l.h_l$ (so $|\epsilon| = \epsilon . \epsilon$).
The next lemmas list the properties we seek.
\begin{lemma}\label{L1}
 Let $a_n$ and $b_n$ two nilsequences of order respectively $k_1$ and $k_2$ then
 \begin{enumerate}
 \item the sequence $c_n = a_nb_n$ is a nilsequence of order $\max\{k_1, k_2\}$
 \item for each $k$ nilsequence $\alpha_n$ the averages $\frac{1}{N}\sum_{n=0}^{N-1} \alpha_n$ converge.
 \end{enumerate}
\end{lemma}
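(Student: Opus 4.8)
The plan is to reduce both statements to the case of \emph{basic} nilsequences and then pass to uniform limits. Recall that by definition a $k$-step nilsequence $\alpha_n$ is a uniform limit $\alpha_n = \lim_{j\to\infty}\alpha_n^{(j)}$ of basic $k$-step nilsequences $\alpha_n^{(j)} = F_j(g_j^n\Gamma_j)$; since each $F_j$ is continuous on a compact nilmanifold, every nilsequence is bounded, a fact I will use repeatedly.

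For part (1), I would first treat the basic case. Write $a_n = F(g^n\Gamma)$ on $X = G/\Gamma$ with $G$ of step $k_1$ and $b_n = F'(h^n\Gamma')$ on $X' = G'/\Gamma'$ with $G'$ of step $k_2$. Consider the product group $G\times G'$ together with the discrete cocompact subgroup $\Gamma\times\Gamma'$, the element $(g,h)$, and the continuous function $H(x,x') = F(x)F'(x')$ on $X\times X'$. Since the lower central series satisfies $\gamma_i(G\times G') = \gamma_i(G)\times\gamma_i(G')$, the product group is nilpotent of step exactly $\max\{k_1,k_2\}$, and because $(g,h)^n = (g^n,h^n)$ we obtain $H\big((g,h)^n(\Gamma\times\Gamma')\big) = F(g^n\Gamma)F'(h^n\Gamma') = a_nb_n$. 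Thus the product of two basic nilsequences is a basic nilsequence of the claimed order. For the general case, I would approximate $a_n, b_n$ uniformly by basic nilsequences $a_n^{(j)}, b_n^{(j)}$ and use the telescoping bound $|a_nb_n - a_n^{(j)}b_n^{(j)}| \le \|a\|_\infty|b_n - b_n^{(j)}| + \|b^{(j)}\|_\infty|a_n - a_n^{(j)}|$; together with the uniform boundedness of the $b^{(j)}$ this shows $a_n^{(j)}b_n^{(j)}\to a_nb_n$ uniformly, so $c_n$ is a uniform limit of basic nilsequences of order $\max\{k_1,k_2\}$, as required.

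For part (2), I would again start from the basic case $\alpha_n = F(g^n\Gamma)$, which is the orbit of the point $\Gamma$ under the nilrotation by $g$ on $X = G/\Gamma$. The key input is the structure theory of nilsystems: the orbit closure $Y = \overline{\{g^n\Gamma : n\ge 0\}}$ is a sub-nilmanifold on which translation by $g$ acts minimally and uniquely ergodically. Unique ergodicity yields, for the continuous function $F$, the convergence $\frac{1}{N}\sum_{n=0}^{N-1}F(g^n\Gamma)\to\int_Y F\,d\mu_Y$, where $\mu_Y$ is the Haar measure of $Y$. To handle a general nilsequence, I would approximate $\alpha_n$ uniformly by basic $\alpha_n^{(j)}$; each $\frac{1}{N}\sum_{n=0}^{N-1}\alpha_n^{(j)}$ converges as $N\to\infty$, and a standard $3\epsilon$-argument (uniform closeness of the averages of $\alpha$ and $\alpha^{(j)}$, together with the Cauchy property of the convergent averages of $\alpha^{(j)}$) shows that $\frac{1}{N}\sum_{n=0}^{N-1}\alpha_n$ is Cauchy in $N$, hence converges.

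The routine parts here are the two uniform-limit passages and the product-group bookkeeping; the genuine content, which I would simply cite from \cite{HostKraUniformity}, is the unique ergodicity of the orbit closure of a point in a nilsystem underlying part (2). This is the main obstacle in the sense that it is the one step that cannot be reduced to elementary estimates and rests on the equidistribution theory of nilmanifolds.
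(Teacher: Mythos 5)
Your proposal is correct and follows essentially the same route as the paper, whose own proof is just a two-sentence sketch of exactly this argument: the product nilmanifold $G/\Gamma \times G'/\Gamma'$ for part (1), and unique ergodicity of the nilsystem (restricted to the orbit closure) for part (2). Your write-up simply supplies the details—the product-group bookkeeping and the uniform-limit approximations—that the paper leaves implicit.
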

\begin{proof}
 The first part follows immediately from the nilpotent structure of the product of two homogeneous spaces generating the sequences $a_n$ and $b_n.$ The second part is a consequence of the unique ergodicity of the  system associated with the $k$-step nilsequence $\alpha_n.$
\end{proof}
\begin{lemma}\label{L2}
  Let us fix $k\in \NN$. Assume that the real bounded sequence $a_n$ is such that
  $$c_h= \lim_N\frac{1}{N}\sum_{n=0}^{N-1} \prod_{\epsilon\in \{0,1\}^k} a_{n+ h.\epsilon}$$ exists. Then
  \begin{enumerate}
 \item $\lim_{H}\frac{1}{H^k}\sum_{h_1, ...,h_k= 0}^{H-1} c_h$  exists and is nonnegative. Therefore
 $ \| a\|_k = \left( \lim_{H}\frac{1}{H^k}\sum_{h_1, ...,h_k= 0}^{H-1} c_h\right)^{1/2^k}$ is well defined.
 \item if $\| a\|_k=0$ then for any $k-1$ step nilsequence $b_n$ we have
 $\lim_N\frac{1}{N} \sum_{n=0}^{N-1} a_n b_n = 0.$
  \end{enumerate}
\end{lemma}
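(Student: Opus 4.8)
The plan is to treat both parts by a single induction on $k$, peeling off one averaging variable at a time and reducing level-$k$ statements about $a$ to level-$(k-1)$ statements about the shifted products $b^{(h)}_n := a_n a_{n+h}$. The starting observation is the factorization $\prod_{\epsilon\in\{0,1\}^k}a_{n+h.\epsilon}=\prod_{\epsilon'\in\{0,1\}^{k-1}}b^{(h_k)}_{n+h'.\epsilon'}$ (writing $\epsilon=(\epsilon',\epsilon_k)$ and $h=(h',h_k)$), which identifies the level-$k$ correlation $c_h$ of $a$ with the level-$(k-1)$ correlation of $b^{(h_k)}$. Since every $c_{(h',h_k)}$ exists by hypothesis, the correlations of each $b^{(h_k)}$ exist, so $\|b^{(h_k)}\|_{k-1}$ is meaningful by the inductive hypothesis, and one obtains the recursion $\|a\|_k^{2^k}=\lim_H\frac1H\sum_{h_k=0}^{H-1}\|b^{(h_k)}\|_{k-1}^{2^{k-1}}$.

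For part (1), nonnegativity is then immediate by induction, since the right-hand side is a Cesàro average of the nonnegative quantities $\|b^{(h_k)}\|_{k-1}^{2^{k-1}}$. The base case $k=1$ reads $\|a\|_1^2=\lim_H\frac1H\sum_{h=0}^{H-1}c_h$; here $(c_h)$ is positive-definite (being the autocorrelation of $a$), so by Herglotz's theorem $c_h=\widehat{\sigma}(h)$ for a nonnegative measure $\sigma$ on $\TT$, and the Cesàro averages converge to $\sigma(\{0\})\ge 0$ by Wiener's lemma. The delicate point is the existence of the multiparameter limit defining $\|a\|_k$: nonnegativity and boundedness of the averaged terms do not by themselves force a Cesàro limit to exist, and unwinding the recursion leads one to control iterated averages of auxiliary nonnegative arrays. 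The clean way to secure existence is to realize $a$ inside a measure-preserving system—via the orbit closure of $a$ in a product space under the shift, with the invariant measure furnished by the assumed convergence of the correlations—so that $\|a\|_k$ becomes the Host–Kra seminorm of the coordinate function, whose existence and nonnegativity are established in \cite{HostKraUniformity}. I expect this well-definedness to be the main obstacle of part (1); the inductive identity is what reduces it to that classical construction.

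For part (2) I would prove the stronger quantitative statement that for every basic $(k-1)$-step nilsequence $b_n$ there is a constant $C_b$ with $\big|\lim_N\frac1N\sum_{n=0}^{N-1}a_nb_n\big|\le C_b\|a\|_k$, which gives the claim at once; a general $(k-1)$-step nilsequence is a uniform limit of basic ones and, all sequences being bounded, the average is continuous in $b$ for the uniform norm, so it suffices to treat basic nilsequences. For these I apply the van der Corput inequality to $u_n=a_nb_n$:
\[
\limsup_N\Big|\frac1N\sum_{n=0}^{N-1}a_nb_n\Big|^2\ \le\ \limsup_H\frac1H\sum_{h=0}^{H-1}\limsup_N\Big|\frac1N\sum_{n=0}^{N-1}(a_na_{n+h})\,(b_{n+h}\overline{b_n})\Big|.
\]
By the structural properties of nilsequences recorded in \cite{HostKraUniformity} (cf. Lemma~\ref{L1}), the multiplicative derivative $n\mapsto b_{n+h}\overline{b_n}$ is a $(k-2)$-step nilsequence lying in a fixed nilmanifold whose complexity is uniform in $h$. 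Hence the inner average is the correlation of $b^{(h)}_n=a_na_{n+h}$ against a $(k-2)$-step nilsequence, and the inductive hypothesis (applicable since the correlations of $b^{(h)}$ exist by the factorization above) bounds it by $C\|b^{(h)}\|_{k-1}$ with $C$ independent of $h$. Averaging over $h$, using the power-mean inequality $\frac1H\sum_h\|b^{(h)}\|_{k-1}\le\big(\frac1H\sum_h\|b^{(h)}\|_{k-1}^{2^{k-1}}\big)^{1/2^{k-1}}$, and invoking the recursion $\frac1H\sum_h\|b^{(h)}\|_{k-1}^{2^{k-1}}\to\|a\|_k^{2^k}$ yields $\limsup_N\big|\frac1N\sum_{n} a_nb_n\big|^2\le C\|a\|_k^2$, as desired. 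The base case $k=1$ is direct: a $0$-step nilsequence is constant, and $\|a\|_1=0$ forces the mean of $a$ to vanish.

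The crux of the whole argument is thus twofold. First, the existence of the iterated Cesàro limits in part (1): this is where the hypothesis that the corner correlations exist must be upgraded, and the natural device is the associated measure-preserving system, after which well-definedness and nonnegativity of $\|a\|_k$ are exactly the Host–Kra seminorm properties. Second, the derivative-lowering property of nilsequences—that $b_{n+h}\overline{b_n}$ drops the step by one with control uniform in $h$—which is precisely the input from \cite{HostKraUniformity} that makes the van der Corput step feed cleanly into the induction. Everything else is bookkeeping: the factorization of the corner product, the Herglotz–Wiener input at the base level, and the power-mean and continuity-in-$b$ reductions.
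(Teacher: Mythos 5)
Your part (1), as you concede yourself, is not really an induction: the recursion $\|a\|_k^{2^k}=\lim_H\frac1H\sum_{h_k=0}^{H-1}\|b^{(h_k)}\|_{k-1}^{2^{k-1}}$ presupposes exactly the Ces\`aro convergence in $h_k$ that is at issue, and your patch --- embed $a$ in the shift orbit closure, use the assumed correlations to get an invariant measure, and identify $\|a\|_k$ with the Host--Kra seminorm of the coordinate function --- is precisely the content of Proposition 2.2 of \cite{HostKraUniformity}. That is legitimate, and it is in fact the paper's entire proof: the paper establishes both parts of Lemma \ref{L2} by translating its interval notation into that of \cite{HostKraUniformity} and citing Proposition 2.2 for part (1) and Corollary 2.14 for part (2). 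So on part (1) you end exactly where the paper does, just with extra (and dispensable) scaffolding.

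Part (2), however, contains a genuine gap. Your van der Corput induction hinges on the claim that for a basic $(k-1)$-step nilsequence $b_n$ the derivative $n\mapsto b_{n+h}\overline{b_n}$ is a $(k-2)$-step nilsequence, ``by the structural properties recorded in \cite{HostKraUniformity} (cf.\ Lemma \ref{L1})''. Lemma \ref{L1} records no such property (it gives closure under products \emph{without} lowering the step, plus convergence of averages), and the claim is false as stated. Already for $k-1=1$: take $b_n=\cos(2\pi(x+n\alpha))$ with $\alpha$ irrational; then
\begin{equation*}
b_{n+h}\overline{b_n}=\tfrac12\cos\big(2\pi(2x+h\alpha+2n\alpha)\big)+\tfrac12\cos(2\pi h\alpha),
\end{equation*}
a nonconstant $1$-step nilsequence, not a constant ($0$-step) one. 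In general $b_{n+h}\overline{b_n}$ lives on $(G\times G)/(\Gamma\times\Gamma)$, which is again $(k-1)$-step nilpotent; the step drops only for a single nonzero \emph{vertical frequency} component of $F$ (a nilcharacter), whose derivative is invariant under the diagonal center, while the cross terms between distinct vertical frequencies --- exactly what the $\cos$ example exhibits --- do not drop step at all. The correct argument must therefore perform the vertical Fourier decomposition \emph{before} van der Corput (zero frequency: the component factors through a $(k-2)$-step quotient; nonzero frequency: the derivative descends to a $(k-2)$-step sub-nilmanifold), and this reduction, with constants uniform in $h$, is the hard core of Corollary 2.14 of \cite{HostKraUniformity}; it cannot be waved through. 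Without it your induction does not close: after van der Corput you face correlations of $a_na_{n+h}$ against $(k-1)$-step nilsequences, and ``correlation with $(k-1)$-step nilsequences is controlled by $\|\cdot\|_{k-1}$'' is false --- e.g.\ $a_n=\cos(2\pi n\alpha)$ has $\|a\|_1=0$ yet $\frac1N\sum_{n=0}^{N-1}a_n\cos(2\pi n\alpha)\to\frac12$. The honest route for part (2) is the one the paper takes: cite Corollary 2.14 of \cite{HostKraUniformity}, which gives, for every $\delta>0$, $\limsup_N\big|\frac1N\sum_{n=0}^{N-1}a_nb_n\big|\le c\,\|a\|_k+\delta\|a\|_\infty$ and hence the conclusion.
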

\begin{proof}
 It is a consequence of the Proposition 2.2 and Corollary 2.14 in \cite{HostKraUniformity}.  In that paper the authors defined what they call local seminorms with respect to a sequence of intervals $I_j$ in $\ZZ$ with length tending to infinity. In this paper we only focus on the sequence of intervals in $\NN$ of the form $[0,  N-1].$
So the seminorm $\||a\||_k$ in our lemma corresponds to the local semi norm $\||a|\|_{I,k}$ with $I= (I_{j})$ where $I_j = [0, j-1].$
 Proposition 2.2 in \cite{HostKraUniformity} says that part (1) is true, while Corollary 2.14 from the same reference tells us that for any $\delta  > 0$, there exists a constant $c$ that depends on $(b_n)$ and $\delta$ such that $\limsup_N \left|\frac{1}{N} \sum_{n=0}^{N-1} a_nb_n\right| \leq c.0 + \delta \|a\|_{\infty} = \delta \|a\|_{\infty}.$  From this, part 2 of the lemma follows.
 \end{proof}
 \noindent{\bf Remarks:}
    In order to eliminate possible confusion between the local semi norms $\||a|\|_k$ and the similar notation for the Gowers Host Kra semi-norms we will denote by $\||a|\|_k$  the local semi norm and by $\mathbf{\vertiii{f}_k}$ the GHK semi-norms of a function $f$.
 \subsection{Joinings}
  First we need a lemma allowing to define the limit in Bourgain double recurrence theorem as the integral of the functions with respect to a joining.
   \begin{lemma}\label{L3}
     Given a standard dynamical system $(X, \mathcal{F}, \mu, T)$ and $a, b\in \ZZ$ , $a\neq b$, for $\mu$ a.e. $x$ there exists a joining
     $\mu_x$ defined on $((X\times X), \mathcal{F}^2)$ which is $T^a\times T^b$ invariant such that for any continuous function $f\otimes g$, we have
     \begin{enumerate}
     \item  $\lim_N \frac{1}{N}\sum_{n=0}^{N-1} f(T^{an}x)g(T^{bn}x) = \int f\otimes g d\mu_x$
     \item There exists a joining $\omega$ on $((X\times X), \mathcal{F}^2)$ such that
      \begin{equation}\label{join}
      \int \lim_N \frac{1}{N}\sum_{n=0}^{N-1} f(T^{an}x)g(T^{bn}x)d\mu = \int f\otimes g d\mu_xd\mu(x)= \int f\otimes g d\omega .
      \end{equation}
      \item Furthermore if $\mathcal{I}$ denotes the $\sigma$ field of the $T^{b-a}$ invariant subsets of $X$     there exists a kernel $K(x, y)$ such that we have 
      \begin{equation}
       \int f\otimes g d\omega = \int \EE[f |\mathcal{I}]\EE [g |\mathcal{I}] d\mu=   \int\int K(x,y) f(x) g(y)d\mu\otimes\mu
      \end{equation}
     \end{enumerate}

   \end{lemma}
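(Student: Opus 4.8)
The plan is to construct the fiber measures $\mu_x$ pointwise out of Bourgain's double recurrence theorem \cite{BourgainDR}, average them to produce the joining $\omega$, and finally identify $\omega$ with the relatively independent self-joining of $(X,\mu)$ over the invariant $\sigma$-field $\mathcal{I}$. First I would exploit that $X$ is a compact metric space, so $\mathcal{C}(X)$ is separable: fix a countable dense $\QQ$-subalgebra $\{f_i\}\subset\mathcal{C}(X)$, apply Bourgain's theorem to each pair $(f_i,f_j)$, and intersect the countably many resulting null sets to obtain a single set $X_0$ of full measure on which $\frac1N\sum_{n=0}^{N-1} f_i(T^{an}x)f_j(T^{bn}x)$ converges for all $i,j$. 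For $x\in X_0$ the functional $f\otimes g\mapsto \lim_N\frac1N\sum_{n=0}^{N-1} f(T^{an}x)g(T^{bn}x)$ is then defined on the span of the $f_i\otimes f_j$, and since $\bigl|\frac1N\sum_{n=0}^{N-1} h(T^{an}x,T^{bn}x)\bigr|\le\|h\|_\infty$ for every $h\in\mathcal{C}(X\times X)$, it extends by uniform continuity to all of $\mathcal{C}(X\times X)$, which is the closure of that span by Stone--Weierstrass. Positivity holds termwise and the constant $1$ is sent to $1$, so the Riesz representation theorem yields a Borel probability measure $\mu_x$ satisfying (1). A shift-by-one computation shows $\int (f\circ T^a)\otimes(g\circ T^b)\,d\mu_x$ and $\int f\otimes g\,d\mu_x$ differ only by boundary terms of size $O(1/N)$, giving the $T^a\times T^b$-invariance.

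For part (2), I would note that $x\mapsto\int f\otimes g\,d\mu_x$ is a pointwise limit of measurable functions, hence measurable, so $\omega$ defined by $\int h\,d\omega=\int\bigl(\int h\,d\mu_x\bigr)\,d\mu(x)$ is a Borel probability measure on $X\times X$. Because the averages are bounded by $\|f\|_\infty\|g\|_\infty$, dominated convergence permits the exchange of limit and integral that gives the first two equalities of (\ref{join}), while the third is simply the definition of $\omega$. Testing against $g\equiv 1$ and invoking Birkhoff's theorem for $T^a$ identifies the first marginal of $\omega$ as $\int\EE[f|\mathcal{I}_{T^a}]\,d\mu=\int f\,d\mu$, and symmetrically for the second, so $\omega$ is a genuine self-joining.

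The substance of part (3) is the change of variables $z=T^{an}x$: since $\mu$ is $T$-invariant and $T^{bn}T^{-an}=T^{(b-a)n}$, one gets $\int f(T^{an}x)g(T^{bn}x)\,d\mu(x)=\int f(z)\,g(T^{(b-a)n}z)\,d\mu(z)$. Averaging in $n$ and applying the mean ergodic theorem for $T^{b-a}$, the inner average converges in $L^2(\mu)$ to $\EE[g|\mathcal{I}]$, whence $\lim_N\int \frac1N\sum_{n=0}^{N-1} f(T^{an}x)g(T^{bn}x)\,d\mu=\int f\,\EE[g|\mathcal{I}]\,d\mu=\int\EE[f|\mathcal{I}]\EE[g|\mathcal{I}]\,d\mu$, the last step using that $\EE[g|\mathcal{I}]$ is $\mathcal{I}$-measurable. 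Combined with (2) this is the first equality of (3). For the kernel I would disintegrate $\mu$ over $\mathcal{I}$, writing $\mu=\int \mu_{\mathcal{I},\xi}\,d\bar\mu(\xi)$ with $\EE[f|\mathcal{I}](x)=\int f\,d\mu_{\mathcal{I},\pi(x)}$; substituting exhibits $\omega$ as the relatively independent self-joining $\mu\times_{\mathcal{I}}\mu$, whose disintegration furnishes the kernel $K(x,y)$.

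The hard part will be the kernel representation rather than the ergodic-theoretic identities. The operator $\EE[\cdot|\mathcal{I}]$ is a Markov (kernel) operator, but $\mu\times_{\mathcal{I}}\mu$ need not be absolutely continuous with respect to $\mu\otimes\mu$ --- for instance if $T^{b-a}=\mathrm{Id}$ then $\omega$ is the diagonal measure, which is singular to the product. Accordingly $K(x,y)$ must be read as the disintegration kernel representing conditional expectation, and the notation $\int\int K(x,y)f(x)g(y)\,d\mu\otimes\mu$ understood in that sense; making this precise is the only genuinely delicate point, everything else reducing to Stone--Weierstrass, the Riesz theorem, and the pointwise and mean ergodic theorems together with dominated convergence.
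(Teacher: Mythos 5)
Your proof follows the paper's own argument almost step for step: the paper likewise builds $\mu_x$ from a countable dense family of continuous functions on $X^2$ via Bourgain's double recurrence theorem and the Riesz representation theorem, defines $\omega$ by integrating $\mu_x$ against $\mu$ (with dominated convergence justifying the exchange of limit and integral), and identifies $\omega$ as the relatively independent joining over $\mathcal{I}$ by exactly the change-of-variables and mean ergodic theorem computation you spell out --- the paper compresses this to ``simple computations using the measure preserving property of $T^a$.'' The one place you genuinely diverge is the kernel representation in part (3): the paper simply cites Lemma 5.2 of \cite{WWDR}, whereas you construct $K$ directly from the disintegration of $\mu$ over $\mathcal{I}$. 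Your closing caveat there is substantive and correct: the relatively independent joining $\mu\times_{\mathcal{I}}\mu$ is in general singular with respect to $\mu\otimes\mu$ (your example $T^{b-a}=\mathrm{Id}$ is not excluded by the hypothesis $a\neq b$, e.g.\ when $T$ has finite order), so the display $\int\int K(x,y)f(x)g(y)\,d\mu\otimes\mu$ can only be valid with $K$ read as a measure-valued (Markov/disintegration) kernel rather than as a density against the product measure. Your version is thus more self-contained than the paper's, and more careful about the precise sense in which the kernel exists; what the citation to \cite{WWDR} buys the paper is brevity, at the cost of leaving that interpretive point implicit.
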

   \begin{proof}
   By using a countably dense set of continuous functions $F_j$ on $X^2$ we can find a set $\tilde{X}$ of full measure in $X$ on which the averages $\frac{1}{N}\sum_{n=0}^{N-1} F_j(T^{an}x, T^{bn}x)$ converge for each $j.$
    By approximation we can conclude that on the same set $\tilde{X}$ we have the convergence of these averages for each continuous function $F$ on $X^2.$ By Riesz representation theorem we can find a measure $\mu_x$
  on $(X\times X, \mathcal{F}^2)$  such that $$\lim_N\frac{1}{N}\sum_{n=0}^{N-1} F( T^{an}x, T^{bn}x) = \int F d\mu_x.$$
  For the particular case where $F = f\otimes g$ we derive the equality
  $$\int \lim_N \frac{1}{N}\sum_{n=0}^{N-1} f(T^{an}x)g(T^{bn}x)d\mu = \int f\otimes g d\mu_xd\mu(x)$$
  by integration with respect to the measure $\mu.$
   It remains to identify the measure $\omega.$ Simple computations, using the measure preserving property of the map $T^a,$ show that
   $$\int \lim_N \frac{1}{N}\sum_{n=0}^{N-1} f(T^{an}x)g(T^{bn}x)d\mu = \int \EE[f |\mathcal{I}]\EE [g |\mathcal{I}] d\mu$$ where $\mathcal{I}$ is the $\sigma-$algebra of invariant subsets for the map $T^{b-a}.$
   In other words $\omega$ is the relatively independent joining over the $\sigma$ algebra $\mathcal{I}.$
    The last part of the theorem follows from Lemma 5.2 in \cite{WWDR}.
   \end{proof}
   \subsection{ Proving that $\||(g_1(T^{an}x)g_2(T^{bn}x))_n|\|_k = 0$}
   Starting with two functions  in $L^{\infty}(\mu)$ that we can assume to be bounded by one  we can decompose them into the sum of their projections onto the Host-Kra-Ziegler factor \cite{HostKraNEA}, \cite{Ziegler } $\mathcal{Z}_{k-1}$ and onto $\mathcal{Z}_{k -1}^{\perp}.$ We denote by $g_1$ and $g_2$ the projections of these functions onto
   $\mathcal{Z}_{k-1}^{\perp}.$ Our goal in this section is to prove the following lemma.
   \begin{lemma}\label{L4}
     With the notations of the previous lemma we have  for $\mu$ a.e. $x\in X,$
     $$\||(g_1(T^{an}x)g_2(T^{bn}x))_n|\|_k = 0.$$
   \end{lemma}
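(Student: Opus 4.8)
The plan is to recognise the local seminorm of the sequence $a_n=g_1(T^{an}x)g_2(T^{bn}x)$ as a Gowers--Host--Kra seminorm of $g_1\otimes g_2$ computed in the joining of Lemma \ref{L3}, and then to kill that seminorm using the hypothesis that $g_1,g_2$ lie in $\mathcal{Z}_{k-1}^{\perp}$. First I would fix $h=(h_1,\dots,h_k)\in\ZZ^k$ and reorganise the defining product. Setting $G_1^{h}(y)=\prod_{\epsilon\in\{0,1\}^k}g_1(T^{a(h\cdot\epsilon)}y)$ and $G_2^{h}(z)=\prod_{\epsilon\in\{0,1\}^k}g_2(T^{b(h\cdot\epsilon)}z)$, which are bounded measurable functions, one has $\prod_{\epsilon}a_{n+h\cdot\epsilon}=G_1^{h}(T^{an}x)\,G_2^{h}(T^{bn}x)$. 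Applying Bourgain's double recurrence theorem to the pair $(G_1^{h},G_2^{h})$ together with Lemma \ref{L3}(1) shows that for $\mu$-a.e.\ $x$ the averages defining $c_h$ converge and $c_h=\int G_1^{h}\otimes G_2^{h}\,d\mu_x$; intersecting the resulting full-measure sets over the countably many $h\in\ZZ^k$ keeps a single full-measure set, and there $\||a|\|_k$ is well defined and nonnegative by Lemma \ref{L2}(1).

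The key observation is that averaging $c_h$ over $h$ produces a cube average on the product system. With $\tilde T=T^a\times T^b$ acting on $(X\times X,\mathcal F^{2})$,
\[
\frac{1}{H^{k}}\sum_{h_1,\dots,h_k=0}^{H-1}G_1^{h}(u)\,G_2^{h}(v)
=\frac{1}{H^{k}}\sum_{h}\prod_{\epsilon\in\{0,1\}^{k}}(g_1\otimes g_2)\bigl(\tilde T^{\,h\cdot\epsilon}(u,v)\bigr),
\]
which is exactly the average along cubes of the function $g_1\otimes g_2$ for $\tilde T$. Since $\mu_x$ is $\tilde T$-invariant (Lemma \ref{L3}), the pointwise cube convergence of \cite{AssaniCubes} applies on $(X\times X,\mu_x,\tilde T)$, and because all functions are bounded by one, bounded convergence lets me pass $\lim_H$ through the integral against $\mu_x$. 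This identifies $\||a|\|_k^{2^k}=\vertiii{g_1\otimes g_2}_{k,\mu_x}^{2^k}$, the GHK seminorm of $g_1\otimes g_2$ taken in the joining $\mu_x$.

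Since $\||a|\|_k\ge 0$ for a.e.\ $x$, it suffices to show that $\int\||a|\|_k^{2^k}\,d\mu(x)=0$. Integrating the identity of the previous step and using Lemma \ref{L3}(2) to replace $\int\!\!\int(\cdot)\,d\mu_x\,d\mu(x)$ by $\int(\cdot)\,d\omega$, this integral equals $\vertiii{g_1\otimes g_2}_{k,\omega}^{2^k}$, the GHK seminorm relative to the fixed ($\tilde T$-invariant) joining $\omega$. Here the relatively independent description of $\omega$ over $\mathcal I$ from Lemma \ref{L3}(3) becomes available, and since $g_1\otimes g_2$ splits as a function of the first coordinate times a function of the second, a Cauchy--Schwarz--Gowers estimate on $\omega$ should bound $\vertiii{g_1\otimes g_2}_{k,\omega}$ by a power of the GHK seminorm of $g_1$ (respectively $g_2$); these vanish precisely because $g_1,g_2\in\mathcal{Z}_{k-1}^{\perp}$, yielding $\||a|\|_k=0$ for $\mu$-a.e.\ $x$.

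I expect this last step to be the main obstacle. The delicate point is to carry out the Cauchy--Schwarz--Gowers comparison on the \emph{non-product} joining $\omega$ and, above all, to match the seminorm that naturally arises there --- built from $T^a$, $T^b$, and the $T^{b-a}$-invariant $\sigma$-algebra $\mathcal I$ --- with the ordinary GHK seminorm of $g_1$ and $g_2$ for $T$, so that the hypothesis $g_i\perp\mathcal{Z}_{k-1}$ can be applied without ambiguity about which transformation defines the factor. The repeated interchange of $\lim_N$, $\lim_H$, and the two integrations is the routine part, justified at each stage by the uniform bound $\|g_i\|_\infty\le 1$ and bounded convergence.
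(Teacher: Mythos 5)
Your first half runs parallel to the paper: the existence of $c_h(x)$ via Bourgain's double recurrence theorem applied to the pairs $(G^1_h, G^2_h)$, the intersection over the countably many $h \in \ZZ^k$, and the passage through $\mu_x$ and $\omega$ of Lemma \ref{L3} are exactly the paper's opening moves (the paper writes $\lim_H \frac{1}{H^k}\sum_h c_h$ as an integrated cube average over the joining rather than naming it a GHK seminorm of $g_1\otimes g_2$, but that difference is cosmetic). The genuine gap is the last step, which you yourself flag as ``the main obstacle'': you assert that a Cauchy--Schwarz--Gowers estimate ``should'' bound $\vertiii{g_1\otimes g_2}_{k,\omega}$ by a power of $\vertiii{g_i}_k$. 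No such principle holds for general (non-product) joinings, and the failure is not a technicality: take the diagonal joining $\Delta$ of an ergodic system with itself and $f$ a $\pm 1$-valued, mean-zero function on a Bernoulli system, so that $\vertiii{f}_k=0$ for every $k$; then $f\otimes f$ restricted to the diagonal is identically $1$, so the seminorm of $f\otimes f$ computed in $\Delta$ equals $1$ while $\vertiii{f}_k=0$. Hence any correct bound must exploit the specific structure of $\omega$ --- the relative independence over the $\sigma$-algebra $\mathcal{I}$ of $T^{b-a}$-invariant sets, available only because $a\neq b$ --- and extracting that is precisely the content of the lemma, not a step one can defer.

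The paper's proof does this concretely, and in a way that shows your expected inequality is also off by one degree. Using the kernel representation $d\omega = K(x,y)\,d\mu\otimes d\mu$ from Lemma \ref{L3}(3), the paper bounds the cube averages $\frac{1}{H^3}\sum_h G^1_h(x)G^2_h(y)$ pointwise by Wiener--Wintner type suprema (Lemma 6 of \cite{AssaniCubes}), applies van der Corput to difference away one direction, and then invokes the uniform Wiener--Wintner estimates in the spirit of \cite{AssaniPresser} to arrive, after averaging in the van der Corput parameter, at the bound $\frac{1}{K}\sum_{k=1}^{K}\min\left(\vertiii{g_1\cdot g_1(T^{ak})}_3,\ \vertiii{g_2\cdot g_2(T^{bk})}_3\right)$, which by the inductive structure of the GHK seminorms tends to $\min\left(\vertiii{g_1}_4,\vertiii{g_2}_4\right)$ as $K\to\infty$. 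Note the index: the correlation built into $\omega$ costs one degree, so what controls the cube-$3$ average is $\vertiii{g_i}_4$, not $\vertiii{g_i}_3$; your hoped-for bound in terms of the seminorm of index $k$ matching $\mathcal{Z}_{k-1}^\perp$ is therefore doubtful even in form. (The same van der Corput step is also what absorbs the $T^a$-versus-$T$ issue you raise, through constants depending on $a,b$.) In short, your reduction to a seminorm on $\omega$ is a reasonable reformulation, but the proposal stops exactly where the proof has to begin.
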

   \begin{proof}
   First we need to check that $c_h(x) $ exists for $a_n(x) =g_1(T^{an}x).g_2(T^{bn}x).$  This follows from J. Bourgain a.e. double recurrence result \cite{BourgainDR}.  Indeed  the quantity
   $ \prod_{\epsilon\in \{0,1\}^k} a_{n+ h.\epsilon}(x)$ can be written as the product of two functions
   $G^1_{h_1,h_2,..., h_k}(T^{an}x)G^2_{h_1,h_2, ...,h_k}(T^{bn}x).$ Therefore  for $\mu$ a.e. $x$  the limit of the averages
     $$\lim_N\frac{1}{N}\sum_{n=0}^{N-1} G^1_{h_1,h_2,..., h_k}(T^{an}x)G^2_{h_1,h_2, ...,h_k}(T^{bn}x)$$
     exists.
      To be more explicit and for the simplicity of the notation we can look at the case $k=3.$ The same ideas will give the proof for the case $k>3.$
      The product  $ \prod_{\epsilon\in \{0,1\}^k} a_{n+ h.\epsilon}(x)$ is equal to the product of
      \begin{align*}
      &G^1_{h_1,h_2, h_3}(T^{an}x) \\
      &= g_1(T^{an}x)g_1(T^{a(n+h_1)}x)g_1(T^{a(n+h_2)}x)g_1(T^{a(n+h_3)}x)g_1(T^{a(n+(h_1+h_2)}x)\\
      &g_1(T^{a(n+(h_1 + h_3)}x)g_1(T^{a(n+ h_2 + h_3)}x) g_1(T^{a(n+h_1 + h_2+ h_3)}x)
      \end{align*}
        and
       \begin{align*}
      &G^2_{h_1,h_2, h_3}(T^{bn}x) \\
      &= g_2(T^{bn}x)g_2(T^{b(n+h_1)}x)g_2(T^{b(n+h_2)}x)g_2(T^{b(n+h_3)}x)g_2(T^{b(n+(h_1+h_2)}x)\\
      &g_2(T^{b(n+(h_1 + h_3)}x)g_2(T^{b(n+ h_2 + h_3)}x) g_2(T^{b(n+h_1 + h_2+ h_3)}x).
       \end{align*}
       By Lemma \ref{L2} we have
       \begin{enumerate}
     \item $\lim_H\frac{1}{H^3} \sum_{h_1, h_2, h_3 = 0}^{H-1} c_h(x) \geq 0$
     \item $\|| g_1(T^{an}x)g_2(T^{bn}x)|\|_3 = \left(\lim_H\frac{1}{H^3} \sum_{h_1, h_2, h_3 = 0}^{H-1} c_h(x)\right)^{1/2^k}$
        \end{enumerate}
       Our  goal is to show that
       \begin{equation}\label{equat}
        \limsup_H \left(\frac{1}{H^3} \sum_{h_1, h_2, h_3 = 0}^{H-1} c_h(x)\right) = 0.
       \end{equation}
        This last equation would certainly suffice to prove Lemma \ref{L4}.
          To establish (\ref{equat}) we will show that
           \begin{equation}
            \int  \limsup_H \left(\frac{1}{H^3} \sum_{h_1, h_2, h_3 = 0}^{H-1} c_h(x)\right) d\mu = 0.
           \end{equation}
        To this end we use Lemma \ref{L3}
         \begin{align*}
          &\int  \limsup_H \left(\frac{1}{H^3} \sum_{h_1, h_2, h_3 = 0}^{H-1} c_h(x)\right) d\mu =
          \int  \lim_H \left(\frac{1}{H^3} \sum_{h_1, h_2, h_3 = 0}^{H-1} c_h(x)\right) d\mu \\
         & = \int \lim_H \left(\frac{1}{H^3} \sum_{h_1, h_2, h_3 = 0}^{H-1}\int G^1_{h_1,h_2, h_3}\otimes   G^2_{h_1,h_2, h_3}d\mu_x\right) d\mu(x)\\
          &= \lim_H\int  \int \frac{1}{H^3} \sum_{h_1, h_2, h_3 = 0}^{H-1} G^1_{h_1,h_2, h_3}\otimes   G^2_{h_1,h_2, h_3}d\mu_x\ d\mu(x)  \, \text{ by the Dominated convergence theorem}\\
          & = \lim_H  \int \frac{1}{H^3} \sum_{h_1, h_2, h_3 = 0}^{H-1}\ G^1_{h_1,h_2, h_3}(x) G^2_{h_1,h_2, h_3}(x) d\omega \, \text{ by Lemma \ref{L3}}\\
          &= \lim_H \int \int K(x,y)  \frac{1}{H^3} \sum_{h_1, h_2, h_3 = 0}^{H-1}G^1_{h_1,h_2, h_3}(x) G^2_{h_1,h_2, h_3}(y) d\mu\otimes d\mu(x,y)
          \end{align*}
          The quantities $\frac{1}{H^3} \sum_{h_1, h_2, h_3 = 0}^{H-1}\ G^1_{h_1,h_2, h_3}(x) G^2_{h_1,h_2, h_3}(y)$ represent averages along cubes of order 3.  The pointwise estimates obtained for these averages in Lemma 6  in \cite{AssaniCubes} gives us the inequality
          \begin{align*}
        & \left(\frac{1}{H^3} \sum_{h_1, h_2, h_3 = 0}^{H-1}\ G^1_{h_1,h_2, h_3}(x) G^2_{h_1,h_2, h_3}(y)\right)^2 \\
        & \leq C  \frac{1}{H}\sum_{h_2=0}^{H-1} \sup_t\left|\frac{1}{H}\sum_{h_1= 0}^{2(H-1)}
         g_1(T^{ah_1}x)g_2(T^{bh_1}y)g_1(T^{a(h_1+h_2)}x)g_2(T^{b(h_1+h_2)}y)e^{2\pi ih_1 t}\right|^2 \\
         &\leq 2C \frac{1}{H}\sum_{h_2=0}^{H-1} \sup_t\left|\frac{1}{2(H-1)}\sum_{h_1=0}^{2(H-1)}
         g_1(T^{ah_1}x)g_2(T^{bh_1}y)g_1(T^{a(h_1+h_2)}x)g_2(T^{b(h_1+h_2)}y)e^{2\pi ih_1 t}\right|^2
          \end{align*}
        where $C$ is an absolute constant.
        We would like to prove that under the assumption made on the the functions $g_1$ and $g_2$ the last term  converge to zero. To this end we use some of the estimates made in \cite{AssaniCubes}. As $x$ and $y$ will be fixed throughout these estimates, to simplify the notations we simply write
        $\alpha_{h_1} = g_1(T^{ah_1}x)g_2(T^{bh_1}y)$ and  so $\alpha_{h_1+h_2} = g_1(T^{a(h_1 +h_2)}x) g_2(T^{b(h_1 +h_2}y) .$
         We use the van der Corput lemma (see \cite{KuipersNiederreiter}) to derive that  for $(K+1)^2 <H$  we have
         \begin{align*}
          & \sup_t\left|\frac{1}{H}\sum_{h_1}^{H-1} \alpha_{h_1}\alpha_{h_1 +h_2} e^{2\pi ih_1t}\right|^2 \\
          &\leq \frac{C}{K} + \frac{C}{K} \sum_{k=1}^K\left| \frac{1}{H}\sum_{h_1=0}^{H-k-1}\alpha_{h_1}\alpha_{h_1+k}\alpha_{h_1 + h_2}\alpha_{h_1 + h_2 +k}\right|.
         \end{align*}
          As a consequence because the sequence $\alpha_n$ is assumed bounded by one we have
          \begin{align*}
          &\frac{1}{H}\sum_{h_2 =0}^{H-1}\sup_t\left|\frac{1}{H}\sum_{h_{1}=0}^{H-1} \alpha_{h_1}\alpha_{h_1 +h_2} e^{2\pi ih_1t}\right|^2 \\
          &\leq \frac{C}{K} + \frac{C}{K} \sum_{k=1}^K \frac{1}{H}\sum_{h_2=0}^{H-1}\left|\frac{1}{H} \sum_{h_1=0}^{H-1}\alpha_{h_1}\alpha_{h_1+k}\alpha_{h_1 + h_2}\alpha_{h_1 + h_2 +k}\right| \\
          & \leq  \frac{C}{K} + \left(\frac{C}{K} \sum_{k=1}^K \left(\frac{1}{H}\sum_{h_2=0}^{H-1}\left|\frac{1}{H} \sum_{h_1=0}^{H-1}\alpha_{h_1}\alpha_{h_1+ k}\alpha_{h_1 + h_2}\alpha_{h_1 + h_2 +k}\right|^2\right)^{1/2}\right) \\ & \text{ (by Cauchy Schwarz inequality) } 
          \end{align*}
    Now we can apply part 2 of the remarks 3 in \cite{AssaniCubes} . It gives us the following estimate
      \begin{align*}
      \left(
          \frac{1}{H}\sum_{h_{2} = 0}^{H-1}\left| \frac{1}{H} \sum_{h_{1} =0}^{H-1}  \alpha_{h_1}\alpha_{h_1+k}\alpha_{h_1 + h_2}\alpha_{h_1 + h_2 +k}\right|^2 \right)^{1/2} \leq C \sup_t\left|\frac{1}{H}\sum_{ h_1=0}^{2(H-1)}\alpha_{h_1}\alpha_{h_1+ k}e^{2\pi ih_1t}\right|
          \end{align*}
        Going back to the functions $g_1$ and $g_2$ we have obtained the estimate

        \begin{align*}
        & \left(\frac{1}{H^3} \sum_{h_1, h_2, h_3 = 0}^{H-1}\ G^1_{h_1,h_2, h_3}(x) G^2_{h_1,h_2, h_3}(y)\right)^2 \\
        &\leq 2C \frac{1}{H}\sum_{h_2=0}^{H-1} \sup_t\left|\frac{1}{2(H-1)}\sum_{h_1=0}^{2(H-1)}
         g_1(T^{ah_1}x)g_2(T^{bh_1}y)g_1(T^{a(h_1+h_2)}x)g_2(T^{b(h_1+h_2)}y)e^{2\pi ih_1 t}\right|^2  \\
        &  \leq  \frac{C}{K} + \frac{C}{K}\sum_{k=1}^K  \sup_t\left|\frac{1}{2(H-1)}\sum_{h_1=0}^{2(2(H-1)-1)}
        g_1(T^{ah_1}x)g_1(T^{a(h_1+k)}x)g_2(T^{b(h_1)}y)g_2(T^{b(h_1+k)}y) e^{2\pi ih_1t}\right| \\
        & \text{ for each K such that } (K+1)^2 < 2(H-1).
        \end{align*}

        Now we can conclude with the path leading to Uniform Wiener Wintner theorem obtained in \cite{AssaniPresser} ( see Lemma 6 and Lemma 7 in this paper). We can use the same method to show that  for each function $V_1,V_2$ bounded by one 
         $$\int \int\limsup_H\sup_t\left| \frac{1}{H} \sum_{h_1=0}^{H-1} V_1(T^{an}x) V_2(T^{bn}y)e^{2\pi int}\right|^2d\mu\otimes \mu \lesssim_{a, b} \min_{i = 1, 2}\mathbf{\vertiii{V_i}_3^2}.  $$
         Therefore we have
         \begin{equation}\label{eq5}
         \int \int\limsup_H \sup_t\left| \frac{1}{H} \sum_{h_1=0}^{H-1} V_1(T^{an}x V_2(T^{bn}y)e^{2\pi int}\right|d\mu\otimes \mu \lesssim_{a, b} \min_{i = 1, 2}\mathbf{\vertiii{V_i}_3}.
         \end{equation}

 As a consequence of (\ref{eq5})  we derive the following estimate.
\begin{align*}
        & \int\int \limsup_H\left(\frac{1}{H^3} \sum_{h_1, h_2, h_3 = 0}^{H-1}\ G^1_{h_1,h_2, h_3}(x) G^2_{h_1,h_2, h_3}(y)\right)^2 d\mu\otimes \mu \\
        &\leq  \frac{C}{K} + \frac{C}{K} \int\int \sum_{k=1}^K \limsup_H \\
        & \sup_t\left|\frac{1}{(2(H-1)}\sum_{h_1= 0}^{2(2(H-1)-1)}
        g_1(T^{ah_1}x)g_1(T^{a(h_1+k)}x)g_2(T^{b(h_1)}x)g_2(T^{b(h_1+k)}x) e^{2\pi ih_1t}\right| d\mu\otimes \mu \\
        & \lesssim_{a, b}\frac{1}{K}\sum_{k=1}^K \min{\left(\vertiii{g_1.g_1(T^{ak})}_3, \vertiii{g_2.g_2(T^{bk})}_3\right)}\\
        & \lesssim_{a, b} \left( \frac{1}{K}\sum_{k=1}^K \min{\left(\vertiii{g_1.g_1(T^{ak})}^8_3, \vertiii{g_2.g_2(T^{bk})}^8_3\right)}\right)^{1/8}
        \end{align*}
     By taking the limit with respect to $K$ we obtain the upper bound $\min{ (\vertiii{g_1}_4, \vertiii{g_2}_4)}.$
     Thus if $g_1$ or $g_2$ belongs to $\mathcal{Z}_3^{\perp}$
       we have shown that the sequence
       $$ \left(\frac{1}{H^3} \sum_{h_1, h_2, h_3 = 0}^{H-1}\ G^1_{h_1,h_2, h_3}(x) G^2_{h_1,h_2, h_3}(y)\right)^2$$ converge a.e. to zero. The dominated convergence theorem allows us to end the proof of this lemma for $k=3.$
        The general case $k\geq 4$ follows similar steps.
      \end{proof}
        So we have shown that we can find a set of full measure $X_1$ such that if one of the functions $f$ or $g$ in the statement of our main theorem belongs to $\mathcal{Z}_{k-1}^{\perp}$ then the averages
        $\frac{1}{N} \sum_{n=0}^{N-1} f(T^{an}x)g(T^{bn}x)c_n)$ converge to zero for each bounded sequence $c_n$ which is a $k$-step nilsequence.
        It remains the case where both functions belong to $\mathcal{Z}_{k-1}.$
        \subsection{ The functions $f$ and $g$ belong to $\mathcal{Z}_{k-1}$}
         We can assume that each function $f$ and $g$ is continuous. It turns out that for each $x$ the sequences $f(T^{an}x)$ and $g(T^{bn}x) $ are themselves $k$-step nilsequences. As the product of two $k-1$ step nilsequences is also a $k-1$ step nilsequence we can conclude that the product
         $f(T^{an}x)g(T^{bn}x)c_n$ is also a $k$-step nilsequence and therefore the convergence of the averages
         $\frac{1}{N} \sum_{n=0}^{N-1} f(T^{an}x) g(T^{bn}x) c_n$ is immediate in this case.
         In the general case the functions are simply in  $L^{\infty}.$  A simple approximation argument allows to derive the same conclusion from the case where the functions are continuous.

         Combining the results in each subsection we have obtained a proof of our main theorem.

         \noindent{\bf Acknowledgments}:
         We thank B. Weiss for bringing this problem to our attention. Thanks also to H. Abdalaoui for his interest in this question.
         
         \noindent{\bf Remark}:
          A proof of Theorem $\ref{mainResult}$ was also announced by P. Zorin-Kranich \cite{PZorinK} independently.



\bibliographystyle{plain}
\bibliography{RM_Bib}

\end{document}